\newcommand{\zz}{\mathbb{Z}}
\newcommand{\cc}{\mathbb{C}}
\newcommand{\qq}{\mathbb{Q}}
\newcommand{\gf}{\mathbb{F}}
\newcommand{\legendre}[2]{\left(\frac{#1}{#2}\right)}
\newcommand{\onetoone}{\hookrightarrow}
\newcommand{\dnd}{\nmid}
\newcommand{\isom}{\cong}
\newcommand{\Aut}{\mbox{Aut}}
\newcommand{\GL}{\mbox{GL}}
\newcommand{\ord}{\mathrm{ord}}
\newcommand{\abcd}{\left(\begin{array}{cc} 
a & b \\ c & d
\end{array}\right)}
\newtheorem{prop}{Proposition}
\newtheorem{thm}{Theorem}
\newtheorem{cor}{Corollary}
\newtheorem{defn}{Definition}
\begin{document}
\title{Elliptic Curves, Modular Forms, and Sums of Hurwitz Class Numbers\\
(appeared in \textit{Journal of Number Theory})}

\bibliographystyle{plain}
\author{Brittany Brown}
\address{
Brittany Brown\\
Department of Mathematics and Actuarial Science\\
Butler University\\
4600 Sunset Ave.\\
Indianapolis, IN 46208
}

\author{Neil J. Calkin}
\address{
Neil J. Calkin\\
Department of Mathematical Sciences\\
Clemson University\\
Box 340975 Clemson, SC 29634-0975
}
\email{calkin@clemson.edu}

\author{Timothy B. Flowers}
\address{
Timothy B. Flowers\\
Department of Mathematical Sciences\\
Clemson University\\
Box 340975 Clemson, SC 29634-0975
}
\email{tflower@clemson.edu}

\author{Kevin James}
\address{
Kevin James\\
Department of Mathematical Sciences\\
Clemson University\\
Box 340975 Clemson, SC 29634-0975
}
\email{kevja@clemson.edu}

\author{Ethan Smith}
\address{
Ethan Smith\\
Department of Mathematical Sciences\\
Clemson University\\
Box 340975 Clemson, SC 29634-0975
}
\email{ethans@math.clemson.edu}

\author{Amy Stout}
\address{
Amy Stout\\
Department of Mathematics\\
LeConte College\\
1523 Greene Street\\
University of South Carolina\\
Columbia, SC 29298
}
\thanks{This work was partially funded by the NSF grant DMS: 0244001.}

\begin{abstract}
Let $H(N)$ denote the Hurwitz class number.
It is known that if $p$ is a prime, then
\begin{equation*}
\sum_{|r|<2\sqrt p}H(4p-r^2) = 2p.
\end{equation*}
In this paper, we investigate the behavior of this sum with 
the additional condition $r\equiv c\pmod m$.  Three different 
methods will be explored for determining the values of such sums.
First, we will count isomorphism classes of elliptic curves over finite fields.
Second, we will express the sums as coefficients of modular forms.
Third, we will manipulate the Eichler-Selberg trace formula for Hecke operators
to obtain Hurwitz class number relations. 
The cases $m=2,3$ and $4$ are treated in full.  Partial results, as well as 
several conjectures, are 
given for $m=5$ and $7$.
\end{abstract}

\maketitle

\section{Introduction and Statement of Theorems}
We begin by recalling the definition of the Hurwitz class number.
\begin{defn}
For an integer $N\geq 0$, the Hurwitz class number
$H(N)$ is defined as follows.
$H(0)=-1/12$.
If $N\equiv 1$ or $2\pmod 4$, then $H(N)=0$.
Otherwise, $H(N)$ is the number of classes of not necessarily primitive  
positive definite quadratic forms of discriminant $-N$, except that those 
classes which have a representative which is a multiple of the form $x^2+y^2$ 
should be 
counted with 
weight $1/2$ and those which have a representative which is a multiple of the 
form 
$x^2+xy+y^2$ 
should be counted with weight $1/3$.
\end{defn}
Several nice identities are known for sums of Hurwitz class numbers.
For example, it is known that if $p$ is a prime, then
\begin{equation}\label{fullsum}
\sum_{|r|<2\sqrt p}H(4p-r^2) = 2p,
\end{equation}
where the sum is over integers $r$ (both positive, negative, and zero).
See for example~\cite[p. 322]{cox:1989} or~\cite[p. 154]{eic:1956}.

In this paper, we investigate the behavior of this sum with 
additional condition $r\equiv c\pmod m$.
In particular, 
if we split the sum according to the parity of $r$, then 
we have
\begin{thm}\label{mod2split}
 If $p$ is an odd prime, then
\begin{equation*}
\sum_{\substack{|r|<2\sqrt p,\\r\equiv c\pmod 2}}H(4p-r^2)
=\begin{cases}
 \frac{4p-2}{3},&\mbox{ if } c=0,\\ 
 \frac{2p+2}{3},&\mbox{ if } c=1.
 \end{cases}
\end{equation*}
\end{thm}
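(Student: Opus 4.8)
The plan is to realize each of the two restricted sums via counting isomorphism classes of elliptic curves over $\gf_p$, using the Eichler mass formula together with the full identity~\eqref{fullsum}. Recall that for a prime $p$ and an integer $r$ with $|r| < 2\sqrt p$ satisfying $r \not\equiv 0 \pmod p$ (automatic here once $p \geq 5$), the quantity $H(4p - r^2)$ counts, with the usual weights $1/|\mathrm{Aut}|$, the isomorphism classes of elliptic curves $E/\gf_p$ with trace of Frobenius equal to $r$, i.e.\ with $\#E(\gf_p) = p + 1 - r$. Summing over all $r$ in the Hasse interval gives~\eqref{fullsum}. The parity of $r$ is exactly the parity of $p+1 - \#E(\gf_p)$, so since $p$ is odd, $r$ is even precisely when $\#E(\gf_p)$ is even, equivalently when $E(\gf_p)$ has a nontrivial $2$-torsion point. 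Thus $\sum_{r \equiv 0 \, (2)} H(4p-r^2)$ is a weighted count of elliptic curves over $\gf_p$ with even group order, and $\sum_{r \equiv 1\,(2)} H(4p-r^2)$ counts those with odd group order.

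First I would set up the weighted count $S_c := \sum_{|r| < 2\sqrt p,\ r \equiv c\,(2)} H(4p - r^2)$ as $\sum_{E} 1/\#\mathrm{Aut}(E)$ over the relevant isomorphism classes, being careful that the Hurwitz weights $1/2$ and $1/3$ match the automorphism contributions from the curves with $j = 1728$ and $j = 0$ (these require separate bookkeeping exactly as in the proof of~\eqref{fullsum}). Next, to compute $S_1$, I would count isomorphism classes of elliptic curves $E/\gf_p$ with $\#E(\gf_p)$ odd. A curve $y^2 = f(x)$ with $f$ cubic has $2$-torsion governed by the roots of $f$ in $\gf_p$; the group order is odd exactly when $f$ is irreducible over $\gf_p$. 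So the count reduces to: weighted number of isomorphism classes of curves whose defining cubic is irreducible. An alternative, cleaner route is to use the action of the quadratic twist: twisting by the nontrivial character sends $r \mapsto -r$, hence preserves parity, so parity classes are twist-stable and one can count $\gf_p$-isomorphism classes directly by a Weierstrass-model parametrization, tracking $j = 0, 1728$ separately. Carrying out this parametrization (number of monic irreducible cubics over $\gf_p$, divided by the size of the relevant coordinate-change group, with corrections at the special $j$-invariants) should yield $S_1 = (2p+2)/3$; then $S_0 = 2p - S_1 = (4p-2)/3$ by~\eqref{fullsum}.

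As an independent check and a second method, I would also derive the formula from modular forms: the generating function $\sum_{N \geq 0} H(N) q^N$ is (up to normalization) a weight $3/2$ mock/Eisenstein-type form, and the sum $\sum_{|r| < 2\sqrt p} H(4p - r^2)$ arises as the $p$-th coefficient of a weight $2$ form obtained by a Rankin–Cohen/theta multiplication with $\sum_r q^{r^2}$. Splitting the theta series according to the parity of $r$ replaces $\sum_r q^{r^2}$ by $\sum_{r \text{ even}} q^{r^2} = \theta(q^4)$ or its complement; the $p$-th coefficient of the resulting weight $2$ form on the appropriate congruence subgroup, which lies in an Eisenstein series space of small dimension, can be read off explicitly and must reproduce $(4p-2)/3$ and $(2p+2)/3$. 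The main obstacle I anticipate is the careful treatment of the exceptional curves with $j \in \{0, 1728\}$: their automorphism groups have order $4$ or $6$ and their contribution depends on $p \bmod 3$ and $p \bmod 4$, yet the final answer has no such dependence, so these terms must conspire to cancel the congruence-dependent pieces of the main count — verifying this cancellation cleanly is the delicate step. A sanity check against small primes ($p = 3, 5, 7, 11$) pins down the constants and guards against sign or weight errors.
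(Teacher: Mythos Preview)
Your proposal is essentially the same approach as the paper's: count isomorphism classes of elliptic curves over $\gf_p$ according to whether they possess $2$-torsion, handle the special automorphisms at $j=0,1728$ carefully, and deduce the complementary case from the full identity~\eqref{fullsum}. The paper carries this out on the even side, parametrizing $2$-torsion curves as $y^2=x^3+bx^2+cx$ and tracking the six cases in~\eqref{2torclasscount}; you propose the complementary odd side via irreducible cubics, but the content is the same, and your anticipated cancellation of the $p\bmod 12$ dependence is exactly what Propositions~\ref{mod2classcount} and~\ref{mod2corrterm} exhibit.

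One small point: your phrasing that $H(4p-r^2)$ ``counts, with the usual weights $1/|\mathrm{Aut}|$,'' the classes with trace $r$ is off by a factor of two (the Hurwitz weight is $2/|\mathrm{Aut}|$, since generic forms have two automorphisms), so a direct $1/|\mathrm{Aut}|$ mass would give $H(4p-r^2)/2$. The paper instead counts classes \emph{unweighted} and introduces the explicit correction $c_{r,p}$ of Corollary~\ref{thecor}; either normalization works, but be sure to fix one consistently. Your suggested second method via theta-products and weight-$2$ Eisenstein series is precisely the technique the paper deploys in Section~\ref{mf_method}, though there it is applied to the $m=3$ split rather than $m=2$.
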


Once we have the above result, we can use the ideas in its proof to 
quickly prove the next.
\begin{thm}\label{mod4split}
If $p$ is an odd prime,
\begin{equation*}
\sum_{\substack{|r|<2\sqrt p,\\r\equiv c\pmod 4}}H(4p-r^2)
=\begin{cases}
\frac{p+1}{3}, & c\equiv\pm 1\pmod 4,\\
\frac{5p-7}{6}, & c\equiv p+1\pmod 4,\\
\frac{p+1}{2}, & c\equiv p-1\pmod 4.
\end{cases}
\end{equation*}
\end{thm}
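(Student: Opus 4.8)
The idea is to deduce Theorem~\ref{mod4split} from Theorem~\ref{mod2split} plus a single extra evaluation. Two of the three values come for free. Replacing $r$ by $-r$ gives
\[
\sum_{\substack{|r|<2\sqrt p\\ r\equiv 1\,(4)}}H(4p-r^2)=\sum_{\substack{|r|<2\sqrt p\\ r\equiv 3\,(4)}}H(4p-r^2),
\]
and by Theorem~\ref{mod2split} their sum is the odd-$r$ total $(2p+2)/3$, so each equals $(p+1)/3$; this is the $c\equiv\pm1\pmod4$ case. For the even classes, Theorem~\ref{mod2split} only tells us that the $r\equiv0\pmod4$ and $r\equiv2\pmod4$ sums add up to $(4p-2)/3$, and $r\mapsto-r$ is now useless, so one of these two sums must be computed from scratch.

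To do this I would refine the elliptic-curve count underlying Theorem~\ref{mod2split}. By Deuring's theorem, for $0<|r|<2\sqrt p$ with $p\nmid r$ the quantity $\tfrac12H(4p-r^2)$ is the mass $\sum_{a_p(E)=r}|\Aut_{\gf_p}(E)|^{-1}$ of $\gf_p$-isomorphism classes of elliptic curves with trace $r$ (with the usual adjustment at $r=0$; the weights $\tfrac12$ and $\tfrac13$ in $H$ are precisely the $j=1728$ and $j=0$ contributions). Since $p$ is odd, $\#E(\gf_p)=p+1-a_p(E)$ is even exactly when $a_p(E)$ is even, i.e.\ exactly when $E(\gf_p)$ contains a point of order $2$; this is the even/odd split of Theorem~\ref{mod2split}. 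The next refinement is the split of these curves according to whether $4\mid\#E(\gf_p)$ or $\#E(\gf_p)\equiv2\pmod4$: since $\#E\equiv0\pmod4$ forces $a_p\equiv p+1\pmod4$ and $\#E\equiv2\pmod4$ forces $a_p\equiv p-1\pmod4$, this is exactly the case distinction in the statement.

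So it remains to compute the mass $W_4:=\sum_{4\mid\#E(\gf_p)}|\Aut_{\gf_p}(E)|^{-1}$; then Theorem~\ref{mod2split} gives $W_2:=\sum_{\#E\equiv2\,(4)}|\Aut|^{-1}=\tfrac12\cdot\tfrac{4p-2}{3}-W_4$, and doubling yields the two remaining values. I would obtain $W_4$ by inclusion--exclusion on the $2$-primary torsion: $4\mid\#E(\gf_p)$ holds iff $E$ has a rational point of order $4$ or full rational $2$-torsion, so $W_4$ is a combination of the masses of curves carrying a $Y_1(4)$-structure, a $Y(2)$-structure, and both at once. As $X_1(4)$, $X(2)$, and the modular curve classifying both structures all have genus $0$, each such mass is extracted from a point count of the shape $p+1-\#\{\text{cusps}\}$ after passing from $\gf_p$-points of the coarse space to masses and correcting at $j=0,1728$; the arithmetic should collapse to $W_4=(5p-7)/12$, giving the $c\equiv p+1\pmod4$ value $(5p-7)/6$ and the $c\equiv p-1\pmod4$ value $(p+1)/2$, consistently with the fact that all three values sum to $2p$ as in~\eqref{fullsum}. (One can cross-check this analytically: $\sum_{r\equiv0\,(4)}H(4p-r^2)$ is the coefficient of $q^{4p}$ in $\mathcal{H}(\tau)\,\theta(16\tau)$ with $\mathcal{H}$ Zagier's weight-$\tfrac32$ Eisenstein series and $\theta(\tau)=\sum_nq^{n^2}$; its holomorphic projection lies in $M_2(\Gamma_0(64))$, and since every weight-$2$ cusp form of level $64$ has complex multiplication by $\qq(i)$ and hence vanishing $4p$-th coefficient, the sum equals the $4p$-th coefficient of an explicit Eisenstein series.)

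I expect the main obstacle to be the bookkeeping in the mass computation: getting the inclusion--exclusion over the $2$-adic torsion types right, treating the boundary trace $r=0$ and the curves with $|\Aut_{\gf_p}|>2$ correctly, and keeping track of which of $4\mid\#E$ and $\#E\equiv2\pmod4$ matches $a_p\equiv0$ versus $a_p\equiv2$ as $p$ ranges over the two classes mod $4$. In the modular-forms alternative the corresponding difficulty is the holomorphic-projection correction to the non-modular product $\mathcal{H}(\tau)\theta(16\tau)$ and verifying that the resulting Eisenstein coefficient really evaluates to the stated piecewise formula.
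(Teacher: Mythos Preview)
Your plan is correct and is essentially the paper's own approach: the odd classes come from Theorem~\ref{mod2split} plus the symmetry $r\mapsto -r$, and the $c\equiv p+1\pmod4$ class is computed by counting elliptic curves over $\gf_p$ with $4\mid\#E(\gf_p)$, split according to $2$-primary torsion type. The only cosmetic differences are that the paper counts isomorphism classes and subtracts the correction $\sum c_{r,p}$ at the end rather than working with masses, and that instead of invoking $Y_1(4)$ and $Y(2)$ abstractly it parametrizes curves with a point of order $4$ by the explicit Tate normal form $y^2+xy-by=x^3-bx^2$ and partitions (rather than inclusion--exclusion) into those with and without full $2$-torsion.
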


We will also fully characterize the
case $m=3$ by proving the following formulae.
\begin{thm}\label{mod3split}
If $p$ is prime, 
then
\begin{eqnarray*}
\sum_{\substack{|r|<2\sqrt p,\\r\equiv c\pmod 3}}H(4p-r^2)
=\begin{cases}
\frac{p+1}{2}, &\mbox{ if } c\equiv 0\pmod 3,\ p\equiv 1\pmod 3,\\
p-1, &\mbox{ if } c\equiv 0\pmod 3,\ p\equiv 2\pmod 3,\\
\frac{3p-1}{4}, &\mbox{ if } c\equiv\pm 1\pmod 3,\ p\equiv 1\pmod 3,\\
\frac{p+1}{2}, &\mbox{ if } c\equiv\pm 1\pmod 3,\ p\equiv 2\pmod 3.
\end{cases}
\end{eqnarray*}
\end{thm}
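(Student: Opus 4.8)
The plan is to prove Theorem~\ref{mod3split} by the elliptic-curve counting method (the first of the three methods described above). The key input is the Deuring correspondence: for a prime $p>3$ and an integer $r$ with $r^2<4p$,
\[
H(4p-r^2)=\sum_{\substack{E/\mathbb{F}_p\\ \#E(\mathbb{F}_p)=p+1-r}}\frac{2}{|\Aut_{\mathbb{F}_p}(E)|},
\]
the sum being over $\mathbb{F}_p$-isomorphism classes of elliptic curves (the term $r=0$ being the supersingular mass, i.e.\ Eichler's mass formula). Thus $\sum_{r\equiv c\,(3)}H(4p-r^2)$ is the $(2/|\Aut|)$-weighted number of $E/\mathbb{F}_p$ with $\#E(\mathbb{F}_p)\equiv p+1-c\pmod 3$. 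Since $H(4p-r^2)$ depends only on $|r|$, the values at $c$ and $-c$ coincide, so by~\eqref{fullsum} it suffices to compute a single quantity: the weighted count $M$ of $E/\mathbb{F}_p$ with $3\mid\#E(\mathbb{F}_p)$, equivalently those possessing a nonzero $\mathbb{F}_p$-rational point of order $3$. As $3\mid\#E(\mathbb{F}_p)$ is equivalent to $r\equiv p+1\pmod 3$, the number $M$ is the $c\equiv\pm1$ sum when $p\equiv 1\pmod 3$ and the $c\equiv 0$ sum when $p\equiv 2\pmod 3$; the remaining sum is then determined by~\eqref{fullsum}.

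To compute $M$, I would put every pair $(E,P)$ with $P$ of order $3$ over $\mathbb{F}_p$ into the Tate-type normal form $E_{a,b}\colon y^2+axy+by=x^3$ with $P=(0,0)$ (a flex of the cubic), where $a\in\mathbb{F}_p$, $b\in\mathbb{F}_p^\times$, and the discriminant $b^3(a^3-27b)$ is nonzero. A short computation shows that the only $\mathbb{F}_p$-isomorphisms $E_{a,b}\to E_{a',b'}$ carrying $(0,0)$ to $(0,0)$ are $(x,y)\mapsto(u^2x,u^3y)$, acting on parameters by $(a,b)\mapsto(au,bu^3)$ for $u\in\mathbb{F}_p^\times$. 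One counts $\#\{(a,b):b\neq 0,\ a^3\neq 27b\}=(p-1)^2$, and the $\mathbb{F}_p^\times$-action is free except on the line $a=0$ (the $j=0$ curves), where the stabilizer is $\mu_3(\mathbb{F}_p)$. By orbit--stabilizer, the mass of pairs $\sum_{(E,P)}1/|\Aut_{\mathbb{F}_p}(E,P)|$ equals $(p-1)^2/(p-1)=p-1$; on the other hand, applying orbit--stabilizer to the action of $\Aut_{\mathbb{F}_p}(E)$ on the set of nonzero $3$-torsion points of $E$ gives $\sum_{(E,P)}1/|\Aut_{\mathbb{F}_p}(E,P)|=\sum_{3\mid\#E(\mathbb{F}_p)}k(E)/|\Aut_{\mathbb{F}_p}(E)|$, where $k(E)\in\{2,8\}$ is the number of such points, and $k(E)=8$ (full rational $3$-torsion) can occur only when $p\equiv 1\pmod 3$, by the Weil pairing.

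When $p\equiv 2\pmod 3$ this already finishes the proof: $k(E)\equiv 2$, so $M=p-1$ is the $c\equiv 0$ sum, and~\eqref{fullsum} gives the $c\equiv\pm1$ sum as $\tfrac12\bigl(2p-(p-1)\bigr)=(p+1)/2$. When $p\equiv 1\pmod 3$, write $M=M_2+M_8$ according to whether $k(E)=2$ or $8$; the identity above reads $p-1=M_2+4M_8$, so it remains to find the mass $M_8$ of curves with $E[3]\subseteq E(\mathbb{F}_p)$. For this I would pass to the modular curve $X(3)$: since every automorphism of an elliptic curve in characteristic $>3$ acts faithfully on its $3$-torsion, the full level-$3$ moduli problem is rigid, and fixing the Weil pairing value (which lies in $\mathbb{F}_p$ here) makes $Y(3)$ a fine moduli scheme with $X(3)\cong\mathbb{P}^1$ and all $4$ cusps $\mathbb{F}_p$-rational, so $\#Y(3)(\mathbb{F}_p)=p-3$. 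Counting the $|\mathrm{SL}_2(\zz/3\zz)|=24$ level structures on each such $E$ modulo $\Aut_{\mathbb{F}_p}(E)$ yields $p-3=12M_8$, hence $M_8=(p-3)/12$; substituting gives $M_2=2p/3$ and $M=M_2+M_8=(3p-1)/4$, which is the $c\equiv\pm1$ sum, and~\eqref{fullsum} then gives the $c\equiv 0$ sum as $(p+1)/2$.

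The step I expect to demand the most care is the automorphism bookkeeping --- specifically the $j=0$ curves in the normal-form count and the $j=0,1728$ curves in the $\#Y(3)(\mathbb{F}_p)$ computation, where the weight $2/|\Aut_{\mathbb{F}_p}(E)|$ differs from $1$ and the stabilizer sizes must be pinned down exactly; a secondary point needing justification is the Deuring formula at $r=0$ via the supersingular mass formula. Finally, $p=2$ (where the cubic normal form degenerates) is checked directly and agrees with the $p\equiv 2\pmod 3$ formulas, and $p=3$ is excluded, the theorem's displayed cases presupposing $p\not\equiv 0\pmod 3$.
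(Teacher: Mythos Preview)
Your argument is correct, but it proceeds by a genuinely different route from the paper. The paper proves Theorem~\ref{mod3split} via the modular-forms method of Section~\ref{mf_method}: it shows that $\mathscr{H}_1(z;3,1)\theta(z)\in\mathscr{M}_2(36)$, writes this form explicitly in terms of an Eisenstein basis together with the single cusp form attached to $y^2=x^3+1$, and reads off the desired sums as Fourier coefficients (Proposition~\ref{techprop}); the remaining cases then follow from~\eqref{fullsum}. You instead use the elliptic-curve counting method of Section~\ref{elliptic_method}, but with two refinements not present in the paper's treatment of $m=2,4,5,7$: you work throughout with the $2/|\mathrm{Aut}|$-weighted count, which absorbs the correction terms $c_{r,p}$ of Corollary~\ref{thecor} and streamlines the bookkeeping, and for $p\equiv 1\pmod 3$ you handle the full $3$-torsion contribution by counting $\mathbb{F}_p$-points on the fine moduli scheme $Y(3)$ rather than by ad hoc Legendre-symbol arguments. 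Your approach is more self-contained and avoids assembling a twelve-dimensional basis of weight-$2$ forms; the paper's approach, on the other hand, yields the stronger Proposition~\ref{techprop} valid for all $n$, from which both a non-prime version of Theorem~\ref{mod3split} and Theorem~\ref{anothresult} fall out. The points you flag as needing care (the $j=0,1728$ automorphism weights, the rationality of the four cusps of $X(3)$ over $\mathbb{F}_p$ when $p\equiv 1\pmod 3$, and the $r=0$ supersingular mass) are all standard and your outline handles them correctly.
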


We also have a partial characterization for the sum split 
according to the value of $r$ modulo 5.
\begin{thm}\label{partialmod5}
If $p$ is prime, then
\begin{equation*}
\sum_{\substack{|r|<2\sqrt p,\\ r\equiv c\pmod 5}}H(4p-r^2)=
\begin{cases}
\frac{p-1}{2},&\mbox{ if } c\equiv\pm(p+1)\pmod 5,\
p\equiv\pm 2\pmod 5,\\
\frac{p-3}{2},&\mbox{ if } c\equiv 0\pmod 5,\ p\equiv 4\pmod 5.
\end{cases}
\end{equation*}
\end{thm}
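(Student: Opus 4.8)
The plan is to realize the restricted sum as the elliptic contribution to the Eichler--Selberg trace formula for the Hecke operator $T_p$ in weight $2$ and level $5$, and to exploit that the pertinent spaces of cusp forms vanish. Throughout we may take $p\neq 5$, which the hypotheses of the theorem already force. Recall that $\sum_{r^2<4p}H(4p-r^2)=2p$ is exactly \eqref{fullsum}, and that the condition $r\equiv c\pmod 5$ is a condition on the reduction modulo $5$ of the characteristic polynomial $X^2-rX+p$ of Frobenius acting on the $5$-torsion of an elliptic curve over $\gf_p$ of trace $r$ --- hence on whether $r^2-4p$ is a nonzero square, a nonsquare, or $\equiv 0$ modulo $5$. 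In particular, when $p\equiv\pm 2\pmod 5$ the residue class $r\equiv\pm(p+1)\pmod 5$ is precisely the set of $r$ with $r^2-4p$ a nonzero square mod $5$ (and no $r$ satisfies $5\mid r^2-4p$ in this case), while when $p\equiv 4\pmod 5$ the class $r\equiv 0\pmod 5$ is precisely the set of $r$ with $5\mid\#E(\gf_p)$. In moduli terms these single out, respectively, the curves admitting a rational cyclic $5$-isogeny and those admitting a rational $5$-torsion point, so the relevant modular curves are $X_0(5)$ and $X_1(5)$, both of genus $0$.

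Since $X_1(5)$ has genus $0$, $S_2(\Gamma_1(5))=0$, hence $S_2(\Gamma_0(5),\psi)=0$ for every Dirichlet character $\psi$ modulo $5$; in particular $\mathrm{Tr}\bigl(T_p\mid S_2(\Gamma_0(5),\psi)\bigr)=0$ for the trivial character $\psi_0$ and for the quadratic character $\psi$ modulo $5$ --- the only two even characters, the odd ones contributing nothing in weight $2$. I would then write out the Eichler--Selberg trace formula for $\mathrm{Tr}\bigl(T_p\mid S_2(\Gamma_0(5),\psi)\bigr)$: the identity term vanishes because $p$ is not a perfect square, the parabolic and hyperbolic terms are explicit affine functions of $p$ (depending on $\psi(p)$ through a Gauss-sum factor), and the elliptic term is $-\tfrac12\sum_{r^2<4p}H(4p-r^2)\,w_\psi(r,p)$, where $w_\psi(r,p)=\sum_{x\bmod 5,\ x^2\equiv rx-p}\psi(x)$ --- this last identification being valid as long as $25\nmid r^2-4p$, the point at which the conductor sum built into the Hurwitz class number begins to interact with the level and the nebentypus.

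Next I would compute the weights $w_\psi(r,p)$. They depend only on $r^2-4p\bmod 5$ (and, for the quadratic $\psi$, on whether $p$ is a square mod $5$), with $w_{\psi_0}(r,p)=1+\legendre{r^2-4p}{5}$ and $w_\psi(r,p)\in\{\pm2,\,0,\,\pm1\}$ according to the splitting type of $X^2-rX+p$ mod $5$. Sorting $r$ into the classes $r\equiv 0$, $r\equiv\pm1$, $r\equiv\pm2\pmod 5$: for $p\equiv\pm2\pmod 5$ one gets $w_{\psi_0}$ supported on the class $r\equiv\pm(p+1)$ with value $2$ there and $w_\psi\equiv 0$; for $p\equiv 4\pmod 5$ one gets $w_{\psi_0}=(2,1,0)$ and $w_\psi=(2,-1,0)$ on the three classes. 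Feeding these into the two trace-formula identities and combining with \eqref{fullsum} produces a small linear system for the three class sums. When $p\equiv\pm2\pmod 5$ it determines $\sum_{r\equiv\pm(p+1)\,(5)}H(4p-r^2)$, and together with the symmetry $H(4p-r^2)=H(4p-(-r)^2)$ this gives the value $\tfrac{p-1}{2}$ for a single residue class; when $p\equiv 4\pmod 5$ the combination attached to $w_{\psi_0}+w_\psi$, which is supported only on $r\equiv 0$, isolates $\sum_{r\equiv 0\,(5)}H(4p-r^2)=\tfrac{p-3}{2}$. One may equally carry out the argument by counting $\gf_p$-points on $X_0(5)$ and $X_1(5)$, with \eqref{fullsum} replaced by the mass formula for elliptic curves over $\gf_p$ and the class-number sums read off from the moduli interpretation after the usual corrections at $j=0$ and $j=1728$.

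The main difficulty, and the reason the characterization is only partial, is exactly the exception flagged in the elliptic term: once $25\mid r^2-4p$, the Hurwitz conductor sum no longer collapses cleanly against the level-$5$ local factor at $5$, and genuine correction terms appear. Such $r$ occur precisely when $p$ is a square modulo $5$, and the crux is to check that these corrections vanish in the specific linear combination used to extract the answer. For the two cases in the statement this can be arranged --- the combination either never meets such $r$ ($p\equiv\pm2\pmod5$, where there are none) or meets them only through the residue class $r\equiv\pm1$, on which that combination acts trivially ($p\equiv 4\pmod5$) --- but for the other residue classes the corrections survive, and controlling them uniformly would require working at level $25$, where $X_1(25)$ has positive genus and genuine cusp-form Hecke eigenvalues enter the prime-indexed coefficients; this is the source of the remaining conjectural cases. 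In the elliptic-curve picture the same obstruction is the scalar-versus-unipotent ambiguity of Frobenius on $E[5]$: a curve with a repeated Frobenius eigenvalue on its $5$-torsion either has Frobenius acting as a scalar --- so all six lines are rational and there is no finer invariant visible at level $5$ --- or has Frobenius a single Jordan block, and $X_0(5)$, $X_1(5)$ cannot tell these apart; for the curves contributing to the two cases proved here $5\nmid r^2-4p$, so the Frobenius eigenvalues on $E[5]$ are distinct and the ambiguity does not arise.
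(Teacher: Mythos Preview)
Your outline is correct, but it is not the route the paper takes.  The paper proves Theorem~\ref{partialmod5} entirely by the curve--counting method of Section~\ref{elliptic_method}: every case of the theorem has $c\equiv\pm(p+1)\pmod 5$ with $p\not\equiv 0,1\pmod 5$, so one parametrizes isomorphism classes of $E/\gf_p$ carrying a rational point of order~$5$ by the Tate normal form (exactly as in the $m=7$ sketch that follows), uses the absence of full $5$--torsion when $p\not\equiv 1\pmod 5$ to see that each class is hit a fixed number of times, and then subtracts the explicit correction $\sum c_{r,p}$ from~\eqref{crp}.  No trace formula enters.

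Your argument is instead an extension of the paper's Section~\ref{trace_method} method.  For $p\equiv\pm 2\pmod 5$ it is literally that method transported from level~$7$ to level~$5$; your new ingredient is the quadratic nebentypus, which lets you reach $p\equiv 4\pmod 5$ (a residue) where the paper's trivial--character trace formula alone would not suffice.  The point that makes your cancellation claim rigorous---and which is worth saying outright rather than leaving as ``the crux is to check''---is that summing the two even characters mod~$5$ is exactly the $\Gamma_1(5)$ trace formula, whose local weight at~$5$ counts rational points of exact order~$5$; for $p\not\equiv 1\pmod 5$ this is either $0$ or $4$ and depends only on $s\bmod 5$, never on the conductor~$f$, so the $25\mid r^2-4p$ corrections genuinely drop out of the combination (not merely because the naive $w_{\psi_0}+w_\psi$ vanishes on $r\equiv\pm 1$).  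In this light your trace computation and the paper's Tate--normal--form count are two presentations of the same point count on $X_1(5)(\gf_p)$, as your closing remark anticipates.  Your version has the advantage of explaining transparently why the remaining residue classes resist: they would require level~$25$, where cusp forms appear.  The paper's version has the advantage of being entirely elementary, with no appeal to the trace formula.
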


All of the above theorems may be proven by exploiting the relationship 
between Hurwitz class numbers and elliptic curves over finite fields.  In 
Section~\ref{elliptic_method}, we will state this relationship and show 
how it is used to 
prove Theorem~\ref{mod2split}.  We will then briefly sketch how to use 
the same method for the proof of Theorem~\ref{mod4split} as well as 
several cases of Theorem~\ref{partialmod7} below.

In Section~\ref{mf_method}, we will use a result about the modularity of 
certain ``partial" generating functions for the Hurwitz class number to 
prove Theorem~\ref{mod3split}.  The interesting thing about this method is 
that it leads to a far more general result than what is obtainable by the 
method of Section~\ref{elliptic_method}.  Out of this result, it is possible 
to extract a version of Theorem~\ref{mod3split} for $p$ not necessarily 
prime as well as the following.
\begin{thm}\label{anothresult}
If $(n,6)=1$ and there exists a prime $p\equiv 2\pmod 3$ such that 
$\mbox{ord}_p(n)\equiv 1\pmod 2$, then
\begin{equation*}
\sum_{\substack{|r|<\sqrt n,\\r\equiv c_n\pmod 3}}H(n-r^2)
=\frac{\sigma(n)}{12},
\end{equation*}
where we take $c_n=0$ if $n\equiv 1\pmod 3$, and $c_n=1$ or $2$ if 
$n\equiv 2\pmod 3$.
\end{thm}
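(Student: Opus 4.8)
The plan is to realize the left-hand side as a Fourier coefficient of an explicit holomorphic modular form of weight $2$ and then to split that form into its Eisenstein and cuspidal parts. Write $q=e^{2\pi i\tau}$ and, for a residue $c$ modulo $3$, put $\theta_{3,c}(\tau)=\sum_{r\equiv c\,(3)}q^{r^2}$; a short computation gives $\theta_{3,0}(\tau)=\theta(9\tau)$ and $\theta_{3,1}(\tau)=\theta_{3,2}(\tau)=\tfrac12\bigl(\theta(\tau)-\theta(9\tau)\bigr)$, so each $\theta_{3,c}$ is a weight-$1/2$ theta series on $\Gamma_0(36)$. Multiplying the weight-$3/2$ completion $\widehat{\mathcal H}(\tau)=\sum_{N\geq 0}H(N)q^N+\mathcal H^-(\tau)$ of the Hurwitz generating function (Zagier) by $\theta_{3,c}$ and applying holomorphic projection produces a holomorphic form $g_c\in M_2(\Gamma_0(36))$ whose $n$-th Fourier coefficient is $\sum_{r\equiv c\,(3),\,r^2\leq n}H(n-r^2)$ together with an explicit elementary correction coming from the holomorphic projection of $\mathcal H^-\,\theta_{3,c}$; this is the ``modularity of partial generating functions'' input. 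Since the hypothesis on $n$ forbids $n$ from being a perfect square, the constraint $r^2\leq n$ may be replaced by $|r|<\sqrt n$ at no cost.

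Next I would assemble the single generating function relevant to the theorem. Because $c_n=0$ when $n\equiv 1\pmod 3$ and $c_n=1$ when $n\equiv 2\pmod 3$, and because the indicator function of the class $n\equiv 1\pmod 3$ is $\tfrac12(\varepsilon+\chi)$, where $\varepsilon$ and $\chi$ are the principal and the quadratic Dirichlet characters modulo $3$, the function
\begin{equation*}
G(\tau)=\tfrac12\,g_0\otimes(\varepsilon+\chi)+\tfrac12\,g_1\otimes(\varepsilon-\chi),
\end{equation*}
with $\otimes$ denoting character twisting, has $n$-th coefficient equal to the left-hand side of the theorem whenever $3\nmid n$. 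Twisting by characters modulo $3$ keeps $G$ a holomorphic form of weight $2$ on $\Gamma_0(M)$ for some $M$ divisible only by $2$ and $3$ (one can take $M\mid 108$), so I would write $G=E+S$ with $E$ Eisenstein and $S$ cuspidal and then establish: (i) for $(n,6)=1$ the $n$-th coefficient of $E$ equals $\sigma(n)/12$; and (ii) the cuspidal part $S$ is a linear combination of weight-$2$ newforms with complex multiplication by $\qq(\sqrt{-3})$ and their oldclasses, the basic example being the newform attached to the conductor-$36$ elliptic curve $y^2=x^3+1$.

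Granting (i) and (ii), the theorem follows from the classical vanishing of CM coefficients: if $f=\sum a_f(n)q^n$ has complex multiplication by $\qq(\sqrt{-3})$, then $a_f(n)=0$ as soon as some prime inert in $\qq(\sqrt{-3})$ divides $n$ to an odd power, and the inert primes are exactly those $p\equiv 2\pmod 3$; moreover twisting by a character modulo $3$ does not change $a_f(n)$ when $3\nmid n$. Hence the hypothesis ``$(n,6)=1$ and $\ord_p(n)$ is odd for some prime $p\equiv 2\pmod 3$'' forces the $n$-th coefficient of $S$ to vanish, so that for such $n$ the left-hand side of the theorem, being the $n$-th coefficient of $G$, equals the $n$-th coefficient of $E$, namely $\sigma(n)/12$. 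This is consistent with, and a special case of, the cleaner identity $\sum_{|r|<\sqrt n,\,r\equiv c_n\,(3)}H(n-r^2)=\bigl(\sigma(n)-a_f(n)\bigr)/12$ for $(n,6)=1$, which the modular-forms computation should produce.

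The step I expect to be the main obstacle is (ii), together with the level-and-nebentypus bookkeeping leading up to it: one must pin down precisely the space $M_2(\Gamma_0(M))$ (and character, if any) into which $G$ lands after the holomorphic projection and the twists, and then check that the cuspidal forms which can genuinely occur all have CM by $\qq(\sqrt{-3})$ — so that inertness at primes $p\equiv 2\pmod 3$ is the right condition — rather than including a non-CM newform or a form with CM by a different imaginary quadratic field. A more routine but still delicate point is (i): tracking the constant term and the correction terms from the holomorphic projection of $\mathcal H^-\,\theta_{3,c}$ and verifying that, restricted to integers coprime to $6$, they reassemble together with the honest Eisenstein series into exactly $\sigma(n)/12$.
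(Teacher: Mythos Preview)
Your overall strategy---realize the sum as a weight-$2$ Fourier coefficient, decompose into Eisenstein plus cusp, then kill the cuspidal contribution via CM vanishing at primes inert in $\qq(\sqrt{-3})$---is exactly the paper's. The execution, however, is more roundabout than necessary, and the simpler route the paper takes dissolves both of the obstacles you flag.

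The key simplification you miss is that no holomorphic projection is needed. Because $-1$ is a quadratic non-residue modulo $3$, the \emph{partial Hurwitz series} $\mathscr{H}_1(z;3,1)=\sum_{N\equiv 1\,(3)}H(N)q^N$ is already a holomorphic form of weight $3/2$ on $\Gamma_0(36)$ (Cohen's observation, recorded in the paper as Theorem~\ref{mfandH}, with $G_3=\Gamma_0(36)$). Multiplying by the \emph{full} theta $\theta(z)=\sum_s q^{s^2}$, rather than a partial theta, then produces a genuine holomorphic form $\mathscr{H}_1(z;3,1)\theta(z)\in\mathscr{M}_2(36)$ whose $n$-th coefficient, for $3\nmid n$, is automatically the correct restricted sum: when $n\equiv 1\pmod 3$ only $s\equiv 0$ contributes, and when $n\equiv 2\pmod 3$ only $s\equiv\pm 1$ does. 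No character twisting is required, and the level stays at $36$ rather than climbing to $108$ or (more likely, after twisting by a character mod $3$) to $324$.

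This collapses both of your open steps. The cusp space $\mathscr{S}_2(36)$ is one-dimensional, spanned by the newform $f_E$ attached to $y^2=x^3+1$, which visibly has CM by $\qq(\sqrt{-3})$; so your (ii) is immediate with no bookkeeping. Your (i) becomes a finite linear-algebra check against an explicit eleven-dimensional basis of $\mathscr{E}_2(36)$, with no correction terms from $\mathcal H^-$ to track. Specializing the resulting identity (the paper's Proposition~\ref{techprop}) to $(n,6)=1$ yields, for $n\equiv 1\pmod 3$, the single-class sum $=\tfrac{1}{12}\bigl(\sigma(n)-a(n)\bigr)$, and for $n\equiv 2\pmod 3$ the single-class sum $=\tfrac{1}{12}\sigma(n)-\tfrac{1}{24}a(n)$; in the latter case $a(n)=0$ automatically, so your asserted uniform ``cleaner identity'' $\tfrac{1}{12}(\sigma(n)-a_f(n))$ is slightly off there, though this does not affect the theorem.
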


A third method of proof will be discussed in Section~\ref{trace_method}, which 
uses the Eichler-Selberg trace formula.  
This method will allow us to prove the cases of the following result that  
remain unproven at the end of Section~\ref{elliptic_method}.
\begin{thm}\label{partialmod7}
If $p$ is prime, then
\begin{equation*}
\sum_{\substack{|r|<2\sqrt p,\\ r\equiv c\pmod 7}}H(4p-r^2)=
\begin{cases}
\frac{p+1}{3}, & c\equiv 0\pmod 7,\ p\equiv 3,5\pmod 7,\\
\frac{p-5}{3}, & c\equiv 0\pmod 7,\ p\equiv 6\pmod 7,\\
\frac{p-2}{3}, & c\equiv \pm(p+1)\pmod 7,\ p\equiv 2,3,4,5\pmod 7,\\
\frac{p+1}{3}, & c\equiv\pm 2\pmod 7,\ p\equiv 6\pmod 7.
\end{cases}
\end{equation*}
\end{thm}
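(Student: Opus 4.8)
The plan is to push the elliptic-curve argument behind Theorem~\ref{mod2split} as far as it will go, and to finish the remaining cases with the trace formula. For a prime $p\geq 5$ and $r\in\zz$ with $r^2<4p$, the classical relation between Hurwitz class numbers and elliptic curves over $\gf_p$ (see~\cite[p.~322]{cox:1989}) identifies $H(4p-r^2)$ with the number of $\gf_p$-isomorphism classes of elliptic curves $E/\gf_p$ whose Frobenius $\phi_E$ has trace $r$, each counted with weight $2/\#\Aut_{\gf_p}(E)$. Summing over $r\equiv c\pmod 7$ therefore rewrites the left side of Theorem~\ref{partialmod7} as a weighted count of the elliptic curves $E/\gf_p$ with $\mathrm{tr}(\phi_E)\equiv c\pmod 7$; since $\phi_E$ acts on $E[7]\cong(\zz/7\zz)^2$ with characteristic polynomial $X^2-rX+p$, this is the condition that the characteristic polynomial of $\phi_E$ on the $7$-torsion equal the fixed polynomial $f_c(X):=X^2-cX+p\in\gf_7[X]$. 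I would first factor $f_c$ over $\gf_7$ in each listed case: a short computation shows that $f_c$ always splits over $\gf_7$, that it has $1$ or $-1$ as a root exactly when $c\equiv\pm(p+1)\pmod 7$ (for $c\equiv p+1$ this is the condition $7\mid\#E(\gf_p)$, that is, $E$ has a rational point of order $7$; the case $c\equiv -(p+1)$ reduces to it via the quadratic-twist involution $E\mapsto E'$, which negates the trace and permutes isomorphism classes bijectively), that both roots lie in $\{1,-1\}$ exactly when $c\equiv 0$ and $p\equiv -1\pmod 7$, and that in the cases $c\equiv 0,\ p\equiv 3,5\pmod7$ and $c\equiv\pm2,\ p\equiv 6\pmod7$ the two roots are distinct and neither equals $\pm1$.

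When $c\equiv p+1\pmod 7$ the curves to be counted are exactly those carrying a $\gf_p$-rational point of order $7$, so (for $p\neq 7$) they are the fibres of the universal curve over the genus-$0$ modular curve $X_1(7)$ --- equivalently, the curves admitting Tate's normal form for a point of order $7$, a one-parameter family one counts directly over $\gf_p$. I would start from $\#X_1(7)(\gf_p)=p+1$, remove the rational cusps ($6$ of them when $p\equiv\pm1\pmod7$ and $3$ when $p\equiv 2,3,4,5\pmod7$, since the non-rational cusps of $X_1(7)$ form a single Galois orbit over $\qq(\zeta_7)^+$), and divide by $3$ to pass from points of $Y_1(7)$ to elliptic curves: since the other Frobenius eigenvalue is $p\not\equiv 1$ in these cases, the $1$-eigenspace on $E[7]$ is one-dimensional, so a generic such $E$ has exactly $6$ rational points of order $7$, falling into $3$ orbits under $\pm1$. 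After checking that the finitely many curves with $j\in\{0,1728\}$ do not disturb the weighted count, this yields $\frac{p-2}{3}$ for $p\equiv 2,3,4,5\pmod7$ and $\frac{p-5}{3}$ for $p\equiv 6\pmod7$ (in which case $c\equiv p+1\equiv 0$); the twist involution then gives the same values for $c\equiv -(p+1)$. This is the portion of Theorem~\ref{partialmod7} obtainable by the method of Section~\ref{elliptic_method}.

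The remaining cases --- $c\equiv 0\pmod7$ with $p\equiv 3,5\pmod7$, and $c\equiv\pm2\pmod7$ with $p\equiv 6\pmod7$ --- are those in which $f_c$ has two distinct roots, neither equal to $\pm1$: here $E$ has a rational cyclic $7$-subgroup but no rational point of order $7$, and the relevant moduli problem is a twisted form of $X_1(7)$ singling out the prescribed Frobenius eigenvalue on that subgroup, which I do not see how to count cleanly, and with the exact additive constant, by elementary means. For these I would turn to the Eichler--Selberg trace formula, following Section~\ref{trace_method}. Applied to the Hecke operator $T_p$ on $S_k(\Gamma_0(7),\chi)$ (together with $S_k(\Gamma_1(7))$ and $S_k(\mathrm{SL}_2(\zz))$) for $\chi$ ranging over the Dirichlet characters modulo $7$, it expresses $\mathrm{tr}(T_p)$ as an explicit elementary expression minus a sum $\sum_{r^2<4p}P_{k,\chi}(r)\,H(4p-r^2)$ in which, after the local analysis at $7$, only those $r$ in a prescribed union of residue classes modulo $7$ survive, each weighted by the value of $\chi$ on the corresponding Frobenius eigenvalue --- exactly the mechanism forcing a congruence condition on $r$. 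Taking $k$ so that these cusp-form spaces are small (ideally zero-dimensional), combining the resulting identities over the characters modulo $7$ with \eqref{fullsum} and with the cases already settled, and solving the linear system isolates $\sum_{r\equiv c\,(7)}H(4p-r^2)$ and, after simplification, gives $\frac{p+1}{3}$ in both outstanding cases.

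The main obstacle throughout is keeping the constants exactly right rather than merely up to $O(1)$. On the elliptic-curve side this means controlling the contributions of the non-rational cusps, the elliptic points, and the curves with $j\in\{0,1728\}$, whose larger automorphism groups interact delicately with the Frobenius eigenspaces on $E[7]$; on the trace-formula side it means assembling the elementary terms and the relevant dimension formulas without error. As a final safeguard I would check every formula in the statement numerically against several primes in each residue class modulo $7$.
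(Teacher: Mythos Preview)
Your plan matches the paper's two-stage proof: count curves with a rational $7$-point for $c\equiv\pm(p+1)$, then use the trace formula to finish. Your $X_1(7)$ point count is exactly the paper's explicit Tate-normal-form parametrization rendered in modular-curve language: the paper's cubic $s^3-8s^2+5s+1$ in the discriminant cuts out the three non-$\qq$-rational cusps (rational over $\gf_p$ precisely when $p\equiv\pm1\pmod7$, by the same splitting argument in $\qq(\zeta_7)^+$ you invoke), and your ``divide by $3$'' is the paper's observation that moving $P$, $2P$, or $3P$ to the origin yields three distinct normal forms for a generic $E$; your weighted count via $2/\#\Aut(E)$ and the paper's unweighted count with correction $c_{r,p}$ are equivalent packagings of the same bookkeeping at $j\in\{0,1728\}$. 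The one place you work harder than necessary is the trace formula. The paper applies it only to $T_p$ on $\mathscr{S}_2(\Gamma_0(7))=\{0\}$ with trivial character, obtaining the single relation $\sum_{s}^{*}H(4p-s^2)=p-1$, where $*$ restricts $s\pmod7$ to $\{0,\pm3\}$, $\{0,\pm1\}$, or $\{0,\pm2\}$ according as $p\equiv3,5,6\pmod7$. In each case the classes $\pm(p+1)$ already lie in this triple, so subtracting the known $c\equiv\pm(p+1)$ values (and invoking symmetry when $p\equiv 6$) isolates the remaining sum directly---no non-trivial characters, no $\Gamma_1(7)$, no linear system.
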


Finally, in Section~\ref{conj}, we list several conjectures, which are 
strongly supported by computational evidence.  We also give a few partial 
results 
and discuss strategies for future work.

\section{Elliptic Curves and Hurwitz Class Numbers}\label{elliptic_method}

The proofs we give in this section are 
combinatorial in nature and 
depend on the following, which is due to Deuring.
\begin{thm}[See~\cite{deu:1941} or~\cite{len:1987}]
If r is an integer such that $|r|<2\sqrt p$, then the number of isomorphism 
classes of elliptic curves over $\gf_p$ with exactly $p+1-r$ points is equal 
to the number of equivalence classes of binary quadratic forms with
discriminant $r^2-4p$.
\end{thm}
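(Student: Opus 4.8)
\emph{Proof proposal.} The plan is to compute both sides of the asserted equality through the ideal class groups of imaginary quadratic orders; what makes this work is that isomorphism classes are taken \emph{over the base field} $\gf_p$, so that the relevant endomorphism rings are commutative. Fix $r$ with $|r|<2\sqrt p$ and a curve $E/\gf_p$ with $\#E(\gf_p)=p+1-r$, and let $\pi=\pi_E\in\mathrm{End}_{\gf_p}(E)$ be the $p$-power Frobenius. By Hasse, $\pi^2-r\pi+p=0$, and since $r^2-4p<0$ we have $\pi\notin\qq$. Since $\mathrm{End}_{\gf_p}(E)$ is the subring of $\mathrm{End}_{\overline{\gf_p}}(E)$ fixed by $\mathrm{Gal}(\overline{\gf_p}/\gf_p)$, whose Frobenius generator acts by conjugation by $\pi$, it follows that $\mathrm{End}_{\gf_p}(E)$ is exactly the centralizer of $\pi$ in $\mathrm{End}_{\overline{\gf_p}}(E)$. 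Now $\mathrm{End}^0_{\overline{\gf_p}}(E)$ is either the imaginary quadratic field $K:=\qq(\pi)=\qq(\sqrt{r^2-4p})$ (when $E$ is ordinary) or a quaternion algebra containing $K$ (when $E$ is supersingular), and in either case the centralizer of the noncentral element $\pi$ is $K$. Hence $\mathcal{O}_E:=\mathrm{End}_{\gf_p}(E)$ is an order of $K$ with $\zz[\pi]\subseteq\mathcal{O}_E\subseteq\mathcal{O}_K$, and $\zz[\pi]\cong\zz[x]/(x^2-rx+p)$ has discriminant exactly $r^2-4p$.

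Next I would show that for each order $\mathcal{O}$ with $\zz[\pi]\subseteq\mathcal{O}\subseteq\mathcal{O}_K$, the $\gf_p$-isomorphism classes of curves $E$ with $\mathrm{End}_{\gf_p}(E)=\mathcal{O}$ and $\#E(\gf_p)=p+1-r$ are in bijection with the class group $\mathrm{Cl}(\mathcal{O})$, hence number $h(\mathcal{O})$. This is the Deuring lifting-and-reduction package. Such an $E$ admits a lift to an elliptic curve over a number ring with complex multiplication by the same $\mathcal{O}$ and good reduction recovering $(E,\pi_E)$ --- the Serre--Tate canonical lift when $E$ is ordinary, and a CM lift realizing the prescribed supersingular reduction (possible because $p$ ramifies in $K$) when $E$ is supersingular. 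By the main theorem of complex multiplication, the elliptic curves over $\overline{\qq}$ with CM by $\mathcal{O}$ are the $\cc/\mathfrak{a}$ for $\mathfrak{a}$ a proper fractional $\mathcal{O}$-ideal, with $\cc/\mathfrak{a}\cong\cc/\mathfrak{b}$ precisely when $\mathfrak{a}$ and $\mathfrak{b}$ lie in the same ideal class; thus they are parametrized by $\mathrm{Cl}(\mathcal{O})$. One then checks that reduction modulo a chosen prime above $p$ (in the ring class field of $\mathcal{O}$) is a well-defined bijection from this set onto the $\gf_p$-isomorphism classes in question: surjectivity is the lifting statement, injectivity comes from rigidity of the lift, and the ambiguity between $\pi$ and $\bar\pi$ is absorbed by the two embeddings $K\onetoone\cc$ together with complex conjugation, so that nothing is double counted. (Alternatively, this entire step may be quoted from Waterhouse's classification of the elliptic curves over a finite field within a fixed isogeny class.)

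Summing over all orders between $\zz[\pi]$ and $\mathcal{O}_K$ then gives
\[
\#\bigl\{\,E/\gf_p:\ \#E(\gf_p)=p+1-r\,\bigr\}\big/\!\cong\ =\ \sum_{\zz[\pi]\subseteq\mathcal{O}\subseteq\mathcal{O}_K}h(\mathcal{O}).
\]
It remains to identify the right-hand side with the number of $\mathrm{SL}_2(\zz)$-equivalence classes of positive definite binary quadratic forms of discriminant $D:=r^2-4p$, which is the classical dictionary of Gauss and Dedekind. Writing $D=f_0^2 d_K$ with $d_K$ the fundamental discriminant and $f_0$ the conductor of $\zz[\pi]$, every such form is uniquely $g$ times a primitive form of discriminant $D/g^2$, as $g$ runs over the positive divisors of $f_0$; primitive forms of discriminant $D/g^2=(f_0/g)^2 d_K$ are in bijection with the proper ideal classes of the order of conductor $f_0/g$, of which there are $h(\mathcal{O})$; and the orders of conductor dividing $f_0$ are exactly the orders between $\zz[\pi]$ and $\mathcal{O}_K$. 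So the two counts coincide, which is the assertion.

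I expect the main obstacle to be the bijection of the second paragraph, and within it the supersingular case: an ordinary curve carries a \emph{canonical} Serre--Tate lift, so both a good lift and the rigidity needed for injectivity come essentially for free, whereas a supersingular curve has a positive-dimensional deformation space, and one must instead produce CM lifts by $\qq(\sqrt{-p})$ realizing exactly the prescribed reduction and then verify that the induced map on isomorphism classes is still a bijection --- this is the delicate part of Deuring's original analysis (equivalently, the input one borrows from Waterhouse). The form-theoretic bookkeeping of the third paragraph is routine, apart from minor $2$-adic subtleties when the conductor $f_0$ is even.
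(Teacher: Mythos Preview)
The paper does not prove this theorem at all: it is stated with the attribution ``See~\cite{deu:1941} or~\cite{len:1987}'' and used as a black box, so there is no in-paper argument to compare your proposal against. What you have written is precisely the classical route through Deuring's theory (and, as you note, Waterhouse's reformulation), which is what those references supply.

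Your outline is sound. The three steps --- (i) $\mathrm{End}_{\gf_p}(E)$ is an order in $\qq(\sqrt{r^2-4p})$ sandwiched between $\zz[\pi]$ and $\mathcal{O}_K$, (ii) within a fixed order the $\gf_p$-isomorphism classes with $\#E(\gf_p)=p+1-r$ are a torsor under $\mathrm{Cl}(\mathcal{O})$, and (iii) the Gauss--Dedekind bijection together with the imprimitive-to-primitive decomposition identifies $\sum_{\mathcal{O}}h(\mathcal{O})$ with the total number of form classes of discriminant $r^2-4p$ --- are exactly right, and you have correctly flagged the supersingular lifting as the one genuinely nontrivial input. One small point worth making explicit in step~(ii): you must fix not just the order $\mathcal{O}$ but also which of the two square roots of $r^2-4p$ the Frobenius maps to, since replacing $\pi$ by $\bar\pi$ swaps $r$ with $-r$; you allude to this (``the ambiguity between $\pi$ and $\bar\pi$ is absorbed \dots''), but in a fully written proof this is where one checks that the count is for curves with exactly $p+1-r$ points rather than $p+1\pm r$.
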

\begin{cor}\label{thecor}
For $|r|<2\sqrt p$, the number of isomorphism classes of elliptic curves over 
$\gf_p$ with exactly $p+1-r$ points is given by
$H(4p-r^2)+c_{r,p}$,
where
\begin{equation}\label{crp}
c_{r,p}=\begin{cases}
1/2, &\mbox{if }\ r^2-4p = -4\alpha^2\mbox{ for some }\alpha\in\zz,\\
2/3, &\mbox{if }\ r^2-4p = -3\alpha^2\mbox{ for some }\alpha\in\zz,\\
0, & \mbox{otherwise}.
        \end{cases}
\end{equation}
\end{cor}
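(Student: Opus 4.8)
The plan is to combine Deuring's theorem with a careful bookkeeping of the weighting conventions built into the definition of the Hurwitz class number. By Deuring's theorem, the number of isomorphism classes of elliptic curves over $\gf_p$ with exactly $p+1-r$ points equals the number of equivalence classes of binary quadratic forms of discriminant $D:=r^2-4p$; here forms need not be primitive. So the task reduces to relating this \emph{unweighted} count of $\mathrm{SL}_2(\zz)$-classes of forms of discriminant $D<0$ to the \emph{weighted} count $H(-D)=H(4p-r^2)$ that appears in the definition given in the introduction.

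First I would note that for $D<0$ with $D\equiv 0,1\pmod 4$, the two counts differ only because $H$ assigns weight $1/2$ (resp. $1/3$) to those classes possessing a representative that is a positive multiple of $x^2+y^2$ (resp. $x^2+xy+y^2$), whereas Deuring's count assigns every class weight $1$. Thus the number of form classes equals $H(-D)$ plus the correction $\tfrac12\cdot(\text{number of classes with a rep. that is a multiple of }x^2+y^2)+\tfrac23\cdot(\text{number with a rep. that is a multiple of }x^2+xy+y^2)$, the factor $2/3=1-1/3$ and $1/2=1-1/2$ accounting for the difference between weight $1$ and the reduced weight. Next I would observe that a class contains a multiple $\alpha(x^2+y^2)$ exactly when $D=-4\alpha^2$ for some positive integer $\alpha$, and in that case there is exactly one such class (the class of $\alpha x^2+\alpha y^2$), and it cannot simultaneously contain a multiple of $x^2+xy+y^2$ since the discriminants $-4\alpha^2$ and $-3\beta^2$ are never equal for positive integers $\alpha,\beta$ (one is $\equiv 0\pmod 4$, the other is not, unless $\beta$ is even, but then $-3\beta^2\equiv 0\pmod{12}$ and comparing $2$-adic valuations still forces a contradiction). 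Likewise a class contains a multiple $\beta(x^2+xy+y^2)$ exactly when $D=-3\beta^2$, and then there is exactly one such class. This yields precisely the correction term $c_{r,p}$ in \eqref{crp}.

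The one point that needs a little care — and which I expect to be the main obstacle, though it is more a matter of exposition than of real difficulty — is the uniqueness and non-overlap claim: that for a given $D<0$ there is at most one reduced form that is a positive integer multiple of $x^2+y^2$ (namely when $-D$ is four times a square) and at most one that is a multiple of $x^2+xy+y^2$ (namely when $-D$ is three times a square), and that these two cases are mutually exclusive. This follows from the fact that $x^2+y^2$ and $x^2+xy+y^2$ are each the unique reduced form of their discriminant (class number one for $D=-4$ and $D=-3$), so their multiples $\alpha(x^2+y^2)$ and $\beta(x^2+xy+y^2)$ are reduced and hence determine their classes uniquely, together with the elementary impossibility of $4\alpha^2=3\beta^2$ over the positive integers. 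Assembling these observations gives the stated formula.
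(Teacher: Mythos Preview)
Your argument is correct and is exactly the intended derivation: the paper states the corollary immediately after Deuring's theorem without a separate proof, leaving the reader to compare the unweighted form-class count with the weighted Hurwitz count and observe that the discrepancy is $1-\tfrac12=\tfrac12$ or $1-\tfrac13=\tfrac23$ coming from the unique class represented by $\alpha(x^2+y^2)$ or $\alpha(x^2+xy+y^2)$ when $4p-r^2=4\alpha^2$ or $3\alpha^2$ respectively. Your uniqueness and mutual-exclusivity remarks are the right way to make this precise, so nothing further is needed.
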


Thus, the number of isomorphism classes of elliptic curves $E/\gf_p$ such 
that $m|\#E(\gf_p)$ is equal to
\begin{equation}\label{numb_classes}
\sum_{\substack{|r|<2\sqrt p\\r\equiv p+1\pmod m}}
\left(H(4p-r^2)+c_{r,p}\right).
\end{equation}
This is the main fact that we will exploit in this section.
Another useful fact that we will exploit throughout the paper is the 
symmetry of our sums.  In particular,
\begin{equation}\label{symmetry}
\sum_{\substack{|r|<2\sqrt p\\r\equiv c\pmod m}}H(4p-r^2)
=\sum_{\substack{|r|<2\sqrt p\\r\equiv -c\pmod m}}H(4p-r^2).
\end{equation}

\vspace{12pt}
\noindent\textit{Proof of Theorem~\ref{mod2split}}.

For $p=3$, the identities may be checked by direct calculation.  For the 
remainder of 
the proof, we will assume $p$ is prime and strictly greater than 3.  

Let $N_{2,p}$ denote the number of isomorphism classes of elliptic curves over 
$\gf_p$ possessing $2$-torsion,
and recall that $E$ has $2$-torsion if and only if 
$2|(p+1-r)$.  Thus, 
\begin{equation}\label{Nmp}
N_{2,p}=
\sum_{\substack{|r|<2\sqrt p\\r\equiv p+1\pmod 2}}
\left(H(4p-r^2)+c_{r,p}\right).
\end{equation}

We will proceed by 
computing $N_{2,p}$, the number of isomorphism classes of elliptic 
curves possessing 2-torsion over $\gf_p$.  
Then, we will compute the correction term,
$\sum c_{r,p}$.  In light of (\ref{fullsum}) and (\ref{Nmp}), 
Theorem 1 will follow.

We first recall the relevant background concerning elliptic 
curves with 2-torsion over $\gf_p$.  The reader is referred 
to~\cite{knapp:1992} or~\cite{sil:1986} for more details.
If $E$ is an elliptic curve with 2-torsion 
then, we can move a point of order 2 to the origin in order to 
obtain a model for $E$ of the form
\begin{equation}\label{2torcurve}
E_{b,c}: y^2 = x^3+bx^2+cx.
\end{equation}
The discriminant of such a curve is given by
\begin{equation}
\Delta = 16c^2(b^2-4c).
\end{equation}
We will omit from consideration those pairs $(b,c)$ for which the resulting 
curve has zero discriminant since these curves are singular.

Following~\cite[pp. 46-48]{sil:1986}, we take
$c_4=16(b^2-3c)$ and
$c_6=32b(9c-2b^2)$. Then since char$(\gf_p)\neq 2,3$, 
$E_{b,c}$ is isomorphic to the curve
\begin{equation}\label{c4c6COV}
E': y^2 = x^3-27c_4x-54c_6.
\end{equation}
The curves in this form that are isomorphic to (\ref{c4c6COV}) are 
\begin{eqnarray}
y^2=x^3-27u^4c_4x-54u^6c_6,\ u\neq 0.
\end{eqnarray}
Thus, given any elliptic curve, the number of $(A,B)\in\gf_p^2$ for which the
given curve is isomorphic to $E: y^2=x^3+Ax+B$ is
\begin{equation*}
 \begin{cases}
  \frac{p-1}{6}, & \mbox{ if } A=0\mbox{ and } p\equiv 1\pmod 3,\\
  \frac{p-1}{4}, &\mbox{ if } B=0\mbox{ and } p\equiv 1\pmod 4,\\
  \frac{p-1}{2}, &\mbox{ otherwise}.
 \end{cases}
\end{equation*}
We are interested in how many curves $E_{b,c}$ give the same $c_4$ and $c_6$
coefficients.  
Given an elliptic curve $E: y^2=x^3+Ax+B$
with 2-torsion over $\gf_p$, each choice of an order 2 point to
be moved to the origin yields a different model $E_{b,c}$.  
Thus, the number of $E_{b,c}$
which have the same $c_4$ and $c_6$ coefficients is equal to the number of 
order 2 points possessed by the curves.  This is either 1 or 3 depending on 
whether the curves have full or cyclic 2-torsion.  Thus, the number of 
$(b,c)$ for which $E_{b,c}$ is isomorphic to a given curve is
\begin{equation}\label{2torclasscount}
\begin{cases}
\frac{p-1}{6}, & c_4=0,\ p\equiv 1\pmod 3\mbox{ and 2-torsion is cyclic},\\
\frac{p-1}{4}, & c_6=0,\ p\equiv 1\pmod 4\mbox{ and 2-torsion is cyclic},\\
\frac{p-1}{2}, &\mbox{otherwise with cyclic 2-torsion},\\
\frac{p-1}{2}, & c_4=0,\ p\equiv 1\pmod 3\mbox{ and 2-torsion is full},\\
\frac{3(p-1)}{4}, & c_6=0,\ p\equiv 1\pmod 4\mbox{ and 2-torsion is full},\\
\frac{3(p-1)}{2}, &\mbox{otherwise with full 2-torsion}.
\end{cases}
\end{equation}
The proof of Theorem~\ref{mod2split} will follow immediately from the following
two propositions.
\begin{prop}\label{mod2classcount}
If $p>3$ is prime, then the number of isomorphism classes of elliptic curves 
possessing 2-torsion over $\gf_p$ is given by
\begin{equation*}
N_{2,p}=\begin{cases}
\frac{4p+8}{3}, &\mbox{ if } p\equiv 1\pmod{12},\\
\frac{4p+4}{3}, &\mbox{ if } p\equiv 5\pmod{12},\\
\frac{4p+2}{3}, &\mbox{ if } p\equiv 7\pmod{12},\\
\frac{4p-2}{3}, &\mbox{ if } p\equiv 11\pmod{12}.
\end{cases}
\end{equation*}
\end{prop}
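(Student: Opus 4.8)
The plan is to count the pairs $(b,c)\in\gf_p^2$ for which the curve $E_{b,c}\colon y^2=x^3+bx^2+cx$ is nonsingular, in two different ways, and to compare the results. On the one hand, nonsingularity amounts to $c\neq 0$ and $b^2-4c\neq 0$, and a short count shows there are exactly $(p-1)^2$ such pairs; moreover, for each fixed $b$ there are exactly $(p-1)/2$ values of $c$ for which $b^2-4c$ is a nonsquare, so $p(p-1)/2$ of the nonsingular pairs have $x^2+bx+c$ irreducible — these are exactly the $(b,c)$ whose curve has cyclic $2$-torsion — and the remaining $(p-1)(p-2)/2$ pairs have $x^2+bx+c$ split with distinct nonzero roots, i.e.\ $E_{b,c}$ has full $2$-torsion. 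On the other hand, by the analysis leading to~(\ref{2torclasscount}), each isomorphism class of elliptic curves over $\gf_p$ with $2$-torsion is represented by a number of pairs $(b,c)$ that depends only on whether its $2$-torsion is full or cyclic and on whether $c_4=0$ (i.e.\ $j=0$) or $c_6=0$ (i.e.\ $j=1728$). Writing $N^{\mathrm{full}}$ and $N^{\mathrm{cyc}}$ for the numbers of isomorphism classes with full and with exactly cyclic $2$-torsion, so that $N_{2,p}=N^{\mathrm{full}}+N^{\mathrm{cyc}}$, equating the ``full'' total $(p-1)(p-2)/2$ with the sum of the appropriate entries of~(\ref{2torclasscount}) over full classes yields one linear equation for $N^{\mathrm{full}}$, and doing the same with the ``cyclic'' total $p(p-1)/2$ yields one for $N^{\mathrm{cyc}}$; solving these and adding gives $N_{2,p}$.

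To write those two equations down I would next determine, for each residue of $p$ modulo $12$, how many isomorphism classes have $j\in\{0,1728\}$ and what their $2$-torsion structure is. A curve with $j=0$ may be taken to be $y^2=x^3+B$, and two such are isomorphic exactly when the ratio of the $B$'s is a sixth power, so there are $\gcd(6,p-1)$ of them; such a curve has a rational $2$-torsion point iff $-B$ is a cube, and then the quadratic cofactor has discriminant a nonzero multiple of $-3$, which is a square iff $p\equiv 1\pmod 3$. Hence for $p\equiv 1\pmod 3$ exactly two of the six $j=0$ classes have full $2$-torsion and none have cyclic, whereas for $p\equiv 2\pmod 3$ both $j=0$ classes have cyclic $2$-torsion; in particular the ``cyclic, $c_4=0$, $p\equiv1\pmod3$'' row of~(\ref{2torclasscount}) is vacuous. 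Likewise a curve with $j=1728$ is $y^2=x^3+Ax$, of which there are $\gcd(4,p-1)$ classes, with full $2$-torsion precisely when $-A$ is a square; this produces two full and two cyclic classes when $p\equiv 1\pmod 4$, and one of each when $p\equiv 3\pmod 4$. Substituting these data into the two equations above, once for each of $p\equiv 1,5,7,11\pmod{12}$, produces the four stated values of $N_{2,p}$.

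The hard part will be organizational rather than conceptual: in each of the four congruence classes one must correctly decide which of the six rows of~(\ref{2torclasscount}) are actually occupied and with what multiplicity — keeping in mind both that a $j=0$ or $j=1728$ curve lands in the ``otherwise'' rows whenever the relevant congruence on $p$ fails, and that the ``cyclic, $c_4=0$'' row is empty when $p\equiv1\pmod3$. Once that table is assembled each of the eight resulting linear equations is solved in one line, and it is prudent to check the smallest cases $p=5,7,11,13$ directly as a sanity test.
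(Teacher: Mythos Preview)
Your proposal is correct and follows essentially the same route as the paper: count nonsingular pairs $(b,c)$, split them into full versus cyclic $2$-torsion via the Legendre symbol of $b^2-4c$, and handle the $j=0$ and $j=1728$ classes separately before solving for the generic remainder. The only cosmetic difference is that the paper stays entirely in the $E_{b,c}$ model---computing the number $A_i$ of \emph{pairs} $(b,c)$ landing in each of the six rows of~(\ref{2torclasscount}) and then dividing by the row size---whereas you pass to the short Weierstrass models $y^2=x^3+B$ and $y^2=x^3+Ax$ to count the number of \emph{isomorphism classes} at $j=0,1728$ directly and then back-solve a linear equation; the two bookkeeping schemes are interchangeable and yield the same four values of $N_{2,p}$.
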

\begin{proof}
In view of (\ref{2torclasscount}), 
we want to count the number of curves $E_{b,c}$ that fall into each of 
six categories.
Let $A_1$ denote the number of curves with cyclic 2-torsion and $c_4=0$, 
$A_2$ denote the number of curves with cyclic 2-torsion and $c_6=0$, $A_3$ 
denote the number of curves with cyclic 2-torsion and $c_4c_6\neq 0$, $A_4$ 
denote the number of curves with full 2-torsion and $c_4=0$, $A_5$ denote 
the number of curves with full 2-torsion and $c_6=0$ and $A_6$ denote the 
number of curves with $c_4c_6\neq 0$. 
Then $N_{2,p}$ can be computed by determining
$A_i$ for $i=1,\dots, 6$ and applying (\ref{2torclasscount}).

Now, an elliptic curve $E_{b,c}$ 
has full 2-torsion if and only if
$b^2-4c$ is a square modulo $p$.
Thus, the number of curves possessing full 2-torsion over $\gf_p$ 
is given by
\begin{equation}\label{full2count}
\sum_{\substack{b=0,\\b^2\neq 4c}}^{p-1}\sum_{c=1}^{p-1}
\frac{1}{2}\left[\legendre{b^2-4c}{p}+1\right]
=\frac{(p-1)(p-2)}{2},
\end{equation}
and the number of curves possessing cyclic 2-torsion over $\gf_p$
is given by
\begin{equation}\label{cyclic2count}
\sum_{\substack{b=0,\\b^2\neq 4c}}^{p-1}\sum_{c=1}^{p-1}
-\frac{1}{2}\left[\legendre{b^2-4c}{p}-1\right]
=\frac{p(p-1)}{2}.
\end{equation}

Note that if $c_4=0$, then 
$b^2\equiv 3c\pmod p$ and hence $\legendre{c}{p}=\legendre{3}{p}$.
Thus, there are $p-1$ nonsingular 
curves (\ref{2torcurve}) that give $c_4=0$.
If a nonsingular curve $E_{b,c}$ possesses full 
2-torsion and $c_4=0$, then
$1=\legendre{b^2-4c}{p}=\legendre{-c}{p}=\legendre{-3}{p}=\legendre{p}{3}$.
Thus, when $p\equiv 1\pmod{3}$, all $p-1$ nonsingular curves $E_{b,c}$ 
with $c_4=0$ will have full 2-torsion, and when $p\equiv 2\pmod 3$, all will 
have cyclic 2-torsion.
Thus,
\begin{eqnarray*}
 A_1&=&\begin{cases}
      0, & p\equiv 1\pmod 3,\\
      p-1, & p\equiv 2\pmod 3,
     \end{cases}\\
 A_4&=&\begin{cases}
      p-1, & p\equiv 1\pmod 3,\\
      0, & p\equiv 2\pmod 3.
     \end{cases}
\end{eqnarray*}

Similar computations lead to
\begin{eqnarray*}
 A_2&=&\frac{p-1}{2},\\
 A_5&=&\frac{3(p-1)}{2}.
\end{eqnarray*}
Finally, using 
(\ref{full2count}) and (\ref{cyclic2count}), we see that
\begin{eqnarray*}
 A_3&=&\begin{cases}
        \frac{(p-1)^2}{2}, & p\equiv 1\pmod 3,\\
        \frac{(p-3)(p-1)}{2}, & p\equiv 2\pmod 3,
       \end{cases}\\
 A_6&=&\begin{cases}
        \frac{(p-1)(p-7)}{2}, & p\equiv 1\pmod 3,\\
        \frac{(p-1)(p-5)}{2}, & p\equiv 2\pmod 3.
       \end{cases}
\end{eqnarray*}

Combining these with (\ref{2torclasscount}), the result follows.
\end{proof}
We now compute the correction term in (\ref{Nmp}).
\begin{prop}\label{mod2corrterm}
The value of the correction term is given by
\begin{equation*}
\sum_{\substack{|r|<2\sqrt p,\\r\equiv 0\pmod 2}}c_{r,p}
=\begin{cases}
10/3, & p\equiv 1\pmod{12},\\
2, & p\equiv 5\pmod{12},\\
4/3, & p\equiv 7\pmod{12},\\
0, & p\equiv 11\pmod{12}.
\end{cases}
\end{equation*}
\end{prop}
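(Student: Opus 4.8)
The plan is to read off directly, for each of the two special cases in~(\ref{crp}), which even $r$ with $|r|<2\sqrt p$ make $c_{r,p}$ nonzero, and then add up the contributions. Assume as in the proof of Theorem~\ref{mod2split} that $p>3$ is prime, so that $p$ lies in one of the residue classes $1,5,7,11$ modulo $12$. Consider first the condition $r^2-4p=-4\alpha^2$. From $r^2=4(p-\alpha^2)$ one sees $r$ is automatically even, say $r=2t$, and the condition becomes $p=t^2+\alpha^2$. This is solvable precisely when $p\equiv1\pmod4$, in which case the representation $p=a^2+b^2$ with $0<a<b$ is unique up to order and sign, and $a\neq b$ because $a,b$ have opposite parity. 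Since $(t,\alpha)=(\pm a,\pm b)$ and $(t,\alpha)=(\pm b,\pm a)$ are all solutions, $r=2t$ runs over the four distinct even values $\pm2a,\pm2b$, each of which satisfies $|r|<2\sqrt p$ (as $a^2,b^2<p$) and contributes $c_{r,p}=1/2$. Hence this case contributes $2$ when $p\equiv1\pmod4$ and $0$ when $p\equiv3\pmod4$.

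Next consider the condition $r^2-4p=-3\alpha^2$ with $r=2t$ even. Then $3\alpha^2=4(p-t^2)$ forces $4\mid\alpha^2$, hence $\alpha=2\beta$, and the condition becomes $p=t^2+3\beta^2$. By the classical characterization of primes of the form $x^2+3y^2$, a prime $p>3$ is so represented if and only if $p\equiv1\pmod3$, and then the solution is unique up to the signs of $t$ and $\beta$; since $t$ and $\beta$ are not interchangeable, there is no ``swapped'' solution, so the even $r=2t$ that occur are exactly $\pm2a$, where $p=a^2+3b^2$ with $a,b>0$. Each satisfies $|r|<2\sqrt p$ and contributes $c_{r,p}=2/3$, so this case contributes $\tfrac{4}{3}$ when $p\equiv1\pmod3$ and $0$ when $p\equiv2\pmod3$. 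The two cases are disjoint, since $4\alpha^2=3\beta^2$ forces $\alpha=\beta=0$, which is impossible because $r^2<4p$; in particular $r=0$ never contributes.

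Summing the two contributions yields the correction term: $2+\tfrac{4}{3}=\tfrac{10}{3}$ for $p\equiv1\pmod{12}$; $2$ for $p\equiv5\pmod{12}$ (where $p\equiv1\pmod4$ but $p\equiv2\pmod3$); $\tfrac{4}{3}$ for $p\equiv7\pmod{12}$; and $0$ for $p\equiv11\pmod{12}$, as stated. The calculation itself is short; the two points that need care are recognizing that the existential condition $r^2-4p=-4\alpha^2$ produces \emph{four} values of $r$ rather than two — a consequence of the symmetry between the summands $t^2$ and $\alpha^2$ — and quoting the representation theorems for $x^2+y^2$ and $x^2+3y^2$ with the correct multiplicities. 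I expect the $x^2+3y^2$ bookkeeping (in particular, that every prime $p\equiv1\pmod3$ is genuinely of this form, and essentially uniquely so) to be the most delicate step, though it is entirely classical (see, e.g.,~\cite{cox:1989}).
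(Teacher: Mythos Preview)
Your proof is correct and follows essentially the same approach as the paper's: separate the two nonzero cases of $c_{r,p}$, determine for which residue classes of $p$ each equation has solutions, count the resulting even values of $r$, and add up. The only cosmetic difference is that the paper phrases the solvability and multiplicity arguments in terms of factorization of $p$ in the rings $\zz[i]$ and $\zz\!\left[\tfrac{1+i\sqrt{3}}{2}\right]$ (and counts units there), whereas you phrase the same facts as the classical representation theorems for $x^2+y^2$ and $x^2+3y^2$; these are equivalent formulations of the same argument.
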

\begin{proof}
By (\ref{crp}), we see that each form proportional to $x^2 + xy + y^2$ 
contributes 2/3 to the sum while each form proportional to $x^2 + y^2$
contributes 1/2.

Forms proportional to $x^2 + xy + y^2$ arise for
those
$r \equiv 0 \pmod{2}$ for which there exists $\alpha\in\zz\backslash\{0\}$
such that $r^2-4p=-3\alpha^2$.  
Thus, $p=\left( \frac{r +\alpha i\sqrt{3}}{2}\right)
\left( \frac{r -\alpha i\sqrt{3}}{2}\right)$.
Recall that $p$ factors
in $\zz\left[\frac{1+i\sqrt{3}}{2}\right]$
if and only if $p\equiv 1\pmod{3}$.  For
each such $p$, there are 6 solutions to the above, but only 
2 with $r$ even.
Thus, for $p\equiv 1\pmod{3}$, we must add
$4/3$ to the correction term, and for $p\equiv 2\pmod{3}$, we add 0 to
the correction term.

Forms proportional to $x^2 + y^2$ arise for those
$r \equiv 0 \pmod{2}$ for which there exists $\alpha\in\zz\backslash\{0\}$
such that $r^2-4p=-4\alpha^2$. 
Thus,
$p=\frac{r^2+4\alpha^2}{4}
=\left(\frac{r}{2}+\alpha i\right)\left(\frac{r}{2}-\alpha i\right)$.
Recall that $p$ factors in $\zz[i]$ if and only if $p\equiv 1\pmod{4}$.
Given a prime $p\equiv 1\pmod 4$, there are 4 choices for $r/2$ and hence 4 
choices for $r$.
So, we have 4 forms and need to add $2$ to the
correction term.  When $p\equiv 3\pmod{4}$ we add 0 to the
correction term.
\end{proof}

Combining the results in Propositions~\ref{mod2classcount} 
and~\ref{mod2corrterm}, we have
\begin{equation*}
\sum_{\substack{|r|<2\sqrt p\\r\equiv 0\pmod 2}}H(4p-r^2)
=N_{2,p}-\sum_{\substack{|r|<2\sqrt p\\r\equiv 0\pmod 2}}c_{r,p}
=\frac{4p-2}{3}.
\end{equation*}
Theorem~\ref{mod2split} now follows from (\ref{fullsum}).
\hfill $\Box$

We now give a sketch of the proof of Theorem~\ref{mod4split}.  The proof 
uses some of computations from the proof of Theorem~\ref{mod2split}.

\vspace{12pt}
\noindent\textit{Proof Sketch of Theorem~\ref{mod4split}}.

For $c\equiv\pm 1\pmod 4$, the identities 
\begin{equation*}
\sum_{\substack{|r|<2\sqrt p\\r\equiv c\pmod 2}}H(4p-r^2)
=\frac{p+1}{3}
\end{equation*}
follow directly from Theorem~\ref{mod2split} and (\ref{symmetry}).

By (\ref{numb_classes}), 
$$\sum_{\substack{|r|<2\sqrt p\\r\equiv p+1\pmod 4}}
\left(H(4p-r^2)+c_{r,p}\right)$$
is equal to the number of isomorphism classes of elliptic curves over $E/\gf_p$ 
with $4|\#E(\gf_p)$.  This is equal to the number of classes of curves 
having full 2-torsion plus the number of classes having 
cyclic 4-torsion over $\gf_p$.

As with the Proof of Theorem~\ref{mod2split}, the identities may be checked 
directly for $p=3$.  So, we will assume that $p>3$.
From the proof of Proposition~\ref{mod2classcount}, we see that the number 
of isomorphism classes of curves having full 2-torsion over $\gf_p$ is given by
\begin{equation*}
\begin{cases}
\frac{p+5}{3},& p\equiv 1\pmod{12},\\
\frac{p+1}{3},& p\equiv 5\pmod{12},\\
\frac{p+2}{3},& p\equiv 7\pmod{12},\\
\frac{p-2}{3},& p\equiv 11\pmod{12}.
\end{cases}
\end{equation*}

Following~\cite[pp. 145-147]{knapp:1992}, we see that given any curve 
with 4-torsion over $\gf_p$, we can move the point of order 4 to the origin 
and place the resulting curve into Tate normal to find a model for the curve of the form
\begin{equation}\label{4torcurve}
E_b: y^2+xy-by=x^3-bx^2,
\end{equation}
which has discriminant $\Delta_b=b^4(1+16b)$.  Let $P=(0,0)$ denote the point 
of order 4 on $E_b$.
Thus, as $b$ runs over all of $\gf_p$, we see every class of elliptic curve 
possessing 4-torsion 
over $\gf_p$. As before, we will omit $b=0, 16^{-1}$ from consideration since 
these lead to singular curves.

Given a curve of the form (\ref{4torcurve}), we note that both 
$P=(0,0)$ and $-P$ have order 4.  We see that moving $-P$ to origin and placing 
the resulting curve in Tate normal form gives us exactly the same normal form
as before.  Thus, there is exactly 
one way to represent each cyclic 4-torsion curve in the form (\ref{4torcurve}). 

We are only interested in counting the classes which have cyclic 4-torsion and 
not full 2-torsion (since these have already been counted above).  Thus, 
given a curve (\ref{4torcurve}), we move $2P$ to the origin and place the 
resulting curve in the form (\ref{2torcurve}).  Thus, we see that the curve 
has full 2-torsion if and only if $\legendre{16b+1}{p}=1$.  Hence, we conclude 
that there are $(p-1)/2$ isomorphism classes of curves possessing 
cyclic 4-torsion but not possessing full 2-torsion 
over $\gf_p$.

Finally, in a manner similar to the proof of Proposition~\ref{mod2corrterm},
we check that
\begin{equation*}
\sum_{\substack{|r|<2\sqrt p,\\ r\equiv p+1\pmod 4}}c_{r,p}
=\begin{cases}
7/3,& p\equiv 1\pmod{12},\\
1,& p\equiv 5\pmod{12},\\
4/3,& p\equiv 7\pmod{12},\\
0,& p\equiv 11\pmod{12}.
\end{cases}
\end{equation*}
Combining all the pieces, the result follows.
\hfill $\Box$

For the remainder of this section, we will need the 
following result, which allows us to avoid the problem of detecting full 
$m$-torsion by requiring that our primes satisfy $p\not\equiv 1\pmod m$.

\begin{prop}
If $E$ is an elliptic curve possessing full $m$-torsion over $\gf_p$,
then $p\equiv 1\pmod m$.
\end{prop}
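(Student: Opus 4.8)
The plan is to deduce this from the Weil pairing on $m$-torsion together with the Galois action of the $p$-power Frobenius. First I would dispose of a degenerate case: if $p\mid m$, then full $m$-torsion would require $E[m]\cong(\zz/m\zz)^2$, hence a subgroup isomorphic to $(\zz/p\zz)^2$, which is impossible since in characteristic $p$ the $p$-power torsion of an elliptic curve is either trivial or cyclic; thus the hypothesis already forces $\chr\gf_p=p\nmid m$, and the Weil pairing on $E[m]$ is available.

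Next I would recall (see~\cite[III.8]{sil:1986}) the Weil pairing: a bilinear, alternating, nondegenerate pairing
\[
e_m\colon E[m]\times E[m]\longrightarrow\mu_m,
\]
where $\mu_m\subseteq\overline{\gf_p}^{\,\times}$ is the group of $m$-th roots of unity, which is Galois-equivariant in the sense that $\sigma\bigl(e_m(P,Q)\bigr)=e_m(\sigma P,\sigma Q)$ for every $\sigma\in\mathrm{Gal}(\overline{\gf_p}/\gf_p)$ and all $P,Q\in E[m]$. I would then pick generators $P,Q$ of $E[m]\cong(\zz/m\zz)^2$; nondegeneracy makes $\zeta:=e_m(P,Q)$ a primitive $m$-th root of unity. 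Letting $\phi$ be the $p$-power Frobenius, so that $\phi$ acts on $\overline{\gf_p}$ by $x\mapsto x^p$, the hypothesis that every $m$-torsion point is $\gf_p$-rational gives $\phi P=P$ and $\phi Q=Q$. Equivariance then yields
\[
\zeta^{p}=\phi(\zeta)=e_m(\phi P,\phi Q)=e_m(P,Q)=\zeta,
\]
so $\zeta^{p-1}=1$; since $\zeta$ has exact order $m$, this forces $m\mid p-1$, i.e.\ $p\equiv 1\pmod m$.

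I do not expect a genuine obstacle here: the argument is entirely standard, and the only nonelementary ingredient is the construction and basic properties of the Weil pairing, which I would simply quote from~\cite{sil:1986}. As a remark, one could instead argue that Frobenius acts as the identity on $E[m]$, so its determinant on $E[m]$ equals $1$; but that determinant is congruent modulo $m$ to $\deg\phi=p$, which gives the same conclusion.
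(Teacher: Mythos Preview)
Your proof is correct and takes a route closely related to, but not identical with, the paper's. The paper argues via the Galois representation $\rho_m\colon\mathrm{Gal}(\gf_p(E[m])/\gf_p)\hookrightarrow\GL_2(\zz/m\zz)$: full $m$-torsion over $\gf_p$ forces Frobenius to act as the identity on $E[m]$, and then one invokes \cite[Prop.~V.2.3]{sil:1986} to conclude $p\equiv\det\rho_m(\phi)=\det I=1\pmod m$. Your main argument instead uses the Weil pairing directly to place a primitive $m$-th root of unity inside $\gf_p$, whence $m\mid|\gf_p^\times|=p-1$; this is essentially what underlies Silverman's Prop.~V.2.3, so your version is more self-contained at the cost of spelling out the pairing computation. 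Amusingly, your closing remark---Frobenius is the identity on $E[m]$, and its determinant is $p\bmod m$---is precisely the paper's argument. You are also slightly more careful than the paper in explicitly ruling out $p\mid m$, though since the hypothesis of full $m$-torsion already makes that case vacuous, the paper's omission is harmless.
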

\begin{proof}
Let $G$ be the Galois group of $\gf_p(E[m])/\gf_p$.  
Then $G=\langle\phi\rangle$, where $\phi: \gf_p(E[m])\rightarrow\gf_p(E[m])$ is 
the Frobenius automorphism.
We have the representation 
\begin{equation*}
\rho_m:G\onetoone\Aut(E[m])
\isom\Aut\left(\zz/m\zz\times\zz/m\zz\right)\isom\GL_2(\zz/m\zz).
\end{equation*}
See~\cite[p. 89-90]{sil:1986}.

Now, suppose that $E$ has full $m$-torsion.  Then $\gf_p(E[m])/\gf_p$ is a 
trivial extension.  Whence, $G$ is trivial and  
$\rho_m(\phi)=I\in\GL_2(\zz/m\zz)$.  Therefore, 
applying~\cite[Prop. V.2.3.]{sil:1986}, we have
$p\equiv\det(\rho_m(\phi))\equiv 1\pmod m$.
\end{proof}

We omit the proof of Theorem~\ref{partialmod5} since it is similar to, but 
less involved than the following cases of Theorem~\ref{partialmod7}.

\vspace{12pt}
\noindent\textit{Proof Sketch of Theorem~\ref{partialmod7}
(Cases: $p\not\equiv 0,1\pmod 7; c\equiv \pm (p+1)\pmod 7$)}.

If $p=3$, the identities may be checked directly.  We will assume that 
$p\neq 3,7$ and prime.  Since we also assume that $p\not\equiv 1\pmod 7$, 
we know that no curve may have full 7-torsion over $\gf_p$.  Thus, if $P$ is a  
point of order 7, $E[7](\gf_p)=\langle P\rangle\isom\zz/7\zz$.  

Now, suppose that $E$ possesses 7-torsion, and let $P$ be a point of order 7. 
In a manner similar to~\cite[pp. 145-147]{knapp:1992}, we see that we can move 
$P$ to the origin and put the resulting equation into Tate normal form to 
obtain a model for $E$ of the form
\begin{equation}\label{7torcurve}
E_s: y^2+(1-s^2+s)xy-(s^3-s^2)y=x^3-(s^3-s^2)x^2,
\end{equation}
which has discriminant $\Delta_s=s^7(s-1)^7(s^3-8s^2+5s+1)$.

First, we examine the discriminant.
We note that $s=0,1$ both result in singular curves and so we omit 
these values from consideration.  The cubic $s^3-8s^2+5s+1$ has discriminant 
$7^4$ and hence has Galois group isomorphic to $\zz/3\zz$
(See~\cite[Cor. V.4.7]{hung:1974}).  
Thus, the 
splitting field for the cubic is a degree 3 extension over $\qq$; and 
we see that the cubic will either be irreducible or split completely over 
$\gf_p$.  One can then check that the cubic splits over the cyclotomic field 
$\qq(\zeta_7)$, where $\zeta_7$ is a primitive 7th root of unity.
$\qq(\zeta_7)$ has a unique subfield which is cubic over $\qq$, namely
$\qq(\zeta_7+\zeta_7^6)$.  Thus, $\qq(\zeta_7+\zeta_7^6)$ is the splitting 
field for the cubic $s^3-8s^2+5s+1$.  By examining the way that 
rational primes split in $\qq(\zeta_7)$, one can deduce that rational primes 
are inert in $\qq(\zeta_7+\zeta_7^6)$ unless $p\equiv\pm 1\pmod 7$, in which 
case they split completely.  Thus, we see that the cubic $s^3-8s^2+5s+1$ has 
exactly 3 roots over $\gf_p$ if $p\equiv\pm 1\pmod 7$ and is irreducible 
otherwise.  Hence, as $s$ ranges over all of $\gf_p$, we see $p-5$ 
nonsingular curves (\ref{7torcurve}) if $p\equiv\pm 1\pmod 7$ and $p-2$ 
nonsingular curves (\ref{7torcurve}) otherwise.

Second, we check that the mapping $s\mapsto (1-s^2+s, -(s^3-s^2))$ is a one 
to one mapping of $\gf_p\backslash\{0,1\}$ into $\gf_p^2$.  
Hence, as $s$ ranges over all of 
$\gf_p\backslash\{0,1\}$,
we see $p-2$ distinct equations of the form (\ref{7torcurve}).

Next, we check 
that if we choose to move $-P$ to the origin instead of $P$, we will obtain 
exactly the same Tate normal form for $E$.  Moving $2P$ or $3P$ to the origin 
each result in different normal forms unless $s=0,1$ or is a 
nontrivial cube root of $-1$, in which case both give exactly the same normal 
form as moving $P$ to the origin.  Note that by the above argument, moving 
$-2P$ to the origin will give the same normal form as $2P$ and moving 
$-3P$ to the origin will give the same normal form as $3P$. Now, $s=0,1$ both 
give singular curves; and nontrivial cube roots of $-1$ exists in $\gf_p$ 
if and only if 
$p\equiv 1\pmod 3$, in which case there are exactly 2.  Thus, the number of 
isomorphism classes of curves possessing 7-torsion over $\gf_p$ is given by
\begin{equation*}
\begin{cases}
\frac{p+2}{3},& p\not\equiv\pm 1\pmod 7,\ p\equiv 1\pmod 3,\\
\frac{p-1}{3},& p\equiv 6\pmod 7,\ p\equiv 1\pmod 3,\\
\frac{p-2}{3},& p\not\equiv\pm 1\pmod 7,\ p\equiv 2\pmod 3,\\
\frac{p-5}{3},& p\equiv 6\pmod 7,\ p\equiv 2\pmod 3.
\end{cases}
\end{equation*}

Finally, we check that, for $p\not\equiv 1\pmod 7$,
\begin{equation*}
\sum_{\substack{|r|<2\sqrt p,\\ r\equiv p+1\pmod 7}}c_{r,p}
=\begin{cases}
4/3,& p\equiv 1\pmod 3,\\
0,&\mbox{otherwise.}
\end{cases}
\end{equation*}
The result now follows for $c\equiv\pm (p+1)\pmod 7$ by 
(\ref{numb_classes}) and (\ref{symmetry}).
\hfill $\Box$

The remaining cases of Theorem~\ref{partialmod7} will be treated in 
Section~\ref{trace_method}.

\section{Modular Forms and Hurwitz Class Numbers}\label{mf_method}

We do not give an exhaustive account of modular forms.  Instead 
we refer the reader to Miyake's book~\cite{miy:1989} and Shimura's 
paper~\cite{shi:1973} for the details.  Recall the definition of a modular form.
\begin{defn}\label{mf}
Let $f:\mathfrak{h}\rightarrow\cc$ be holomorphic, let $k\in\frac{1}{2}\zz$, and let 
$\chi$ be a Dirichlet character modulo $N$.  
Then $f$ is said to be a 
modular form of 
weight $k$, level $N$, and character $\chi$ if
\begin{enumerate}
\item 
$\displaystyle f(\gamma z)=\begin{cases}
\chi(d)(cz+d)^kf(z), &\mbox{ if }k\in\zz,\\
\chi(d)\legendre{c}{d}\legendre{-4}{d}^{-k}(cz+d)^kf(z), 
&\mbox{ if }k\in 1/2+\zz
\end{cases}$\newline
for all $\gamma=\abcd\in\Gamma_0(N)$;
\item $f$ is 
holomorphic at the cusps of $\mathfrak{h}/\Gamma_0(N)$.
\end{enumerate}
\end{defn}
We denote this space by $\mathscr{M}_k(N,\chi)$.  In the case 
that $\chi$ is trivial, we will omit the character.  We also recall that 
the space is a finite dimensional vector space over $\cc$ and decomposes as 
\begin{equation*}
\mathscr{M}_k(N,\chi)=\mathscr{E}_k(N,\chi)\oplus\mathscr{S}_k(N,\chi),
\end{equation*}
where $\mathscr{S}_k(N,\chi)$ is the subspace of cusp forms and 
$\mathscr{E}_k(N,\chi)$ is the Eisenstein subspace.

For the remainder, we put $q:=q(z)=e^{2\pi iz}$.
It is well-known that modular forms have natural representations 
as Fourier series.  Here we will develop some notation and show 
how to construct weight 2 Eisenstein series.  Given two Dirichlet 
characters $\psi_1,\psi_2$ with conductors $M_1$ and $M_2$ 
respectively, put $M=M_1M_2$; and put
\begin{equation*}
E_2(z;\psi_1,\psi_2):=\sum_{n=0}^\infty a_nq^n,
\end{equation*}
where
\begin{equation*}
a_0=\begin{cases}
0, &\mbox{if }\psi_1\mbox{ is non-trivial},\\ 
\frac{M-1}{24}, &\mbox{if $\psi_1$ and $\psi_2$ are both trivial},\\
-B_{k, (\psi_1\psi_2)}/4, &\mbox{otherwise,}
\end{cases}
\end{equation*}
and, for $n\ge 1$,
\begin{equation*}
a_n=\sum_{d|n}\psi_1(n/d)\psi_2(d)d.
\end{equation*}
Here $B_{k,\chi}$ denotes the $k^{\mathrm{th}}$ generalized Bernoulli number 
associated to $\chi$, whose generating function is 
$F_\chi(t)=\sum_{a=1}^m\frac{\chi(a)te^{at}}{e^{mt}-1},$ where $m$ is the 
conductor of $\chi$.
Then, subject to a couple of technical conditions on $\psi_1$ and $\psi_2$,
one can show
$E_2(z;\psi_1,\psi_2)\in\mathscr{E}_2(M,\psi_1\psi_2)$.  
See~\cite[pp. 176-181]{miy:1989}.

When computing with modular forms, the following result 
allows us to work with only a finite number of coefficients.
\begin{thm}[See Prop. 1.1 in \cite{fre:1994}]\label{coefftbound}
Suppose that $k\in\zz$ and $f(z)=\sum a_nq^n\in\mathscr{M}_k(N,\chi)$.  
Put $m=\frac{k}{12}N\prod_{p|N}\left(1+\frac{1}{p}\right)$.  Then
$f(z)$ is uniquely 
determined by its Fourier coefficients $a_0,a_1,\dots, a_{[m]}$.
\end{thm}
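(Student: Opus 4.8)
The plan is to deduce this from the classical norm construction together with the valence formula for the full modular group $\mathrm{SL}_2(\zz)$. Write $d:=N\prod_{p\mid N}(1+1/p)$ and recall that $d=[\mathrm{SL}_2(\zz):\Gamma_0(N)]$, so the bound in the statement is $m=\tfrac{k}{12}[\mathrm{SL}_2(\zz):\Gamma_0(N)]=\tfrac{kd}{12}$. Since $f\mapsto(a_0,a_1,\dots,a_{[m]})$ is linear, it suffices to show that an $f=\sum a_nq^n\in\mathscr{M}_k(N,\chi)$ with $a_0=a_1=\cdots=a_{[m]}=0$ must vanish identically. So suppose, for contradiction, that $f\neq 0$; then $\ord_\infty f\geq[m]+1>m$, where $\ord_\infty$ denotes the order of vanishing in the variable $q=e^{2\pi iz}$.

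First I would build the $\mathrm{SL}_2(\zz)$-norm of $f$. Fix coset representatives $\gamma_1=I,\gamma_2,\dots,\gamma_d$ for $\Gamma_0(N)\backslash\mathrm{SL}_2(\zz)$ and set $g:=\prod_{i=1}^{d}f|_k\gamma_i$, where $(f|_k\gamma)(z)=(cz+d)^{-k}f(\gamma z)$ is the usual weight-$k$ slash operator. Given $\delta\in\mathrm{SL}_2(\zz)$, right multiplication permutes the cosets: $\gamma_i\delta=\eta_i\gamma_{\sigma(i)}$ with $\eta_i\in\Gamma_0(N)$ and $\sigma$ a permutation of $\{1,\dots,d\}$. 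By the transformation law in Definition~\ref{mf} (integral-weight case), $f|_k\eta_i=\chi(d_{\eta_i})f$, where $d_{\eta_i}$ is the lower-right entry of $\eta_i$, so $g|_{kd}\delta=\left(\prod_{i}\chi(d_{\eta_i})\right)g=:\epsilon(\delta)\,g$. One checks that $\epsilon\colon\mathrm{SL}_2(\zz)\to\cc^\times$ is a homomorphism; since $\mathrm{SL}_2(\zz)^{\mathrm{ab}}\isom\zz/12\zz$, it satisfies $\epsilon^{12}=1$, and therefore $g^{12}$ is genuinely invariant of weight $12kd$ under $\mathrm{SL}_2(\zz)$. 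Moreover $f$ is holomorphic at every cusp of $\Gamma_0(N)$, and each $\gamma_i(\infty)$ is such a cusp, so each $f|_k\gamma_i$ is holomorphic at $\infty$; hence $g$ and $g^{12}$ are holomorphic at the unique cusp of $\mathrm{SL}_2(\zz)$, i.e. $g^{12}\in\mathscr{M}_{12kd}(\mathrm{SL}_2(\zz))$. Finally $g^{12}\neq 0$, since each $f|_k\gamma_i$ is a nonzero holomorphic function on the connected open set $\mathfrak{h}$ and $\mathcal{O}(\mathfrak{h})$ is an integral domain.

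Then I would compare orders of vanishing at the cusp. Because $\gamma_1=I$, one factor of $g$ is $f$ itself, contributing $\ord_\infty\geq[m]+1$, while all other factors are holomorphic at $\infty$ and contribute $\ord_\infty\geq 0$; hence $\ord_\infty g\geq[m]+1$ and $\ord_\infty g^{12}\geq 12([m]+1)>12m=kd$. But the classical valence formula — which for a nonzero form of weight $w$ on $\mathrm{SL}_2(\zz)$ states that the total order of vanishing, weighted by $1/2$ at $i$, $1/3$ at $\rho=e^{2\pi i/3}$, and $1$ elsewhere, equals $w/12$ — forces $\ord_\infty g^{12}\leq 12kd/12=kd$. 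This contradicts the previous inequality, so no nonzero such $f$ exists, proving the theorem.

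The heart of the argument is the norm-to-level-one construction combined with the valence formula, both standard; the two points that need care are (i) checking that the multiplier $\epsilon$ has order dividing $12$, so that passing to $g^{12}$ really does clear it, and (ii) upgrading ``$f$ is holomorphic at the cusps of $\Gamma_0(N)$'' to ``$g^{12}$ is holomorphic at $\infty$'' — this is precisely where the hypothesis that $f$ is a genuine modular form, rather than merely holomorphic on $\mathfrak{h}$, is used. I would also flag that the argument is specific to integral weight $k$, as assumed in the statement; the half-integral-weight valence formula is more delicate, but that is irrelevant here since the result is only invoked for integer-weight forms.
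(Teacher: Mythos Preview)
Your argument is correct: the norm-to-level-one construction combined with the valence formula for $\mathrm{SL}_2(\zz)$ is exactly the standard proof of this bound (often called Sturm's bound), and you have handled the two delicate points --- clearing the multiplier by passing to $g^{12}$, and ensuring holomorphicity of each factor at $\infty$ --- appropriately.

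That said, there is nothing to compare against: the paper does not supply its own proof of this theorem. It is quoted verbatim as Proposition~1.1 of the cited reference \cite{fre:1994} and used as a black box (specifically, to justify that matching the first thirteen Fourier coefficients suffices in the proof of Theorem~\ref{mod3split}). So your write-up is not merely ``essentially the same'' --- it is a genuine proof where the paper offers only a citation, and it is the argument one would expect to find in the source.
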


In~\cite[p. 90]{zag:1976} (\cite{zag:1975} also), Zagier defines a 
non-holomorphic $q$-series whose holomorphic part is the generating
function for the Hurwitz class number, $H(N)$.  He then shows that the series 
transforms like a modular form of weight 3/2 on $\Gamma_0(4)$.
In~\cite[Cor 3.2]{coh:1975}, Cohen points out that a holomorphic form 
can be obtained by only summing over those $N$ that fall into certain 
arithmetic progressions.  He states without proof that the resulting 
series should be a form on $\Gamma_0(A)$, where he specifies $A$.
However, using techniques similar to those found 
in~\cite[pp. 128-129]{kob:1993}, we 
were only able to prove the following version of this result.
\begin{thm}\label{mfandH}
If $-b$ is a quadratic non-residue modulo $a$, then
\begin{equation*}
 \mathscr{H}_1(z;a,b):=\sum_{N\equiv b\pmod{a}}H(N)q^N
 \in\mathscr{M}_{3/2}(G_a),
\end{equation*}
where 
\begin{equation*}
G_a=\left\{\left(\begin{array}{cc}
\alpha &\beta\\
\gamma &\delta
\end{array}\right)\in\Gamma_0(A):\alpha^2\equiv 1\pmod a\right\},
\end{equation*}
and we take $A=a^2$ if $4|a$ and $A=4a^2$ otherwise.
\end{thm}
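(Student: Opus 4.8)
\textit{Proof idea.} The plan is to follow the strategy of Zagier, Cohen and Koblitz and obtain $\mathscr H_1(z;a,b)$ by \emph{sieving} Zagier's weight-$3/2$ non-holomorphic Eisenstein series down to the residue class $N\equiv b\pmod a$. Recall that Zagier constructs a function
\begin{equation*}
\mathscr F(z)=\sum_{N\ge 0}H(N)q^N+\frac{1}{\sqrt y}\sum_{f\in\zz}\beta(4\pi f^2y)\,q^{-f^2},\qquad y=\operatorname{Im}z,
\end{equation*}
for an explicit incomplete-gamma-type function $\beta$, which transforms like a modular form of weight $3/2$ on $\Gamma_0(4)$ with the theta-multiplier and is ``holomorphic at the cusps'' in the sense that its Fourier expansions at all cusps involve no negative powers of $q$. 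The crucial structural feature is that the \emph{non-holomorphic part} of $\mathscr F$ is supported precisely on the exponents $-f^2$, $f\in\zz$. To isolate the progression $N\equiv b\pmod a$ I would apply the operator
\begin{equation*}
(P_b g)(z):=\frac1a\sum_{j=0}^{a-1}e^{-2\pi ibj/a}\,g\!\left(z+\frac{j}{a}\right),
\end{equation*}
which retains exactly those Fourier coefficients whose index is congruent to $b$ modulo $a$.

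The hypothesis pays off immediately. A term $q^{-f^2}$ from the non-holomorphic part of $\mathscr F$ survives $P_b$ only if $-f^2\equiv b\pmod a$, i.e. only if $-b$ is a square modulo $a$; since $-b$ is assumed to be a quadratic non-residue, the entire non-holomorphic part is killed (as is the constant term $H(0)=-\tfrac1{12}$, which sits at $N=0$ and is removed because $b\not\equiv0\pmod a$). Hence $P_b\mathscr F=\mathscr H_1(z;a,b)$ is a genuinely holomorphic $q$-series, and what remains is to pin down the group under which it is automorphic and to check holomorphy at the cusps.

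For the transformation law I would work directly with $G_a$. Fix $M\in G_a$ with entries $\alpha,\beta,\gamma,\delta$, so that $A\mid\gamma$ and $\alpha^2\equiv1\pmod a$; note $\alpha$ is a unit modulo $a$ since $\alpha\delta\equiv1\pmod a$. For each $j$ one checks the matrix identity
\begin{equation*}
M\!\left(z+\frac{j}{a}\right)=M_j\!\left(z+\frac{j'}{a}\right),\qquad j'\equiv j\delta\alpha^{-1}\pmod a,
\end{equation*}
where $M_j$ shares the lower row of $M$ (up to the translation shift) and the divisibility $A\mid\gamma$ is exactly what forces $M_j$ to have integral entries and to lie in $\Gamma_0(4)$. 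Substituting into $P_b\mathscr F(Mz)$, the automorphy factor $(\gamma z+\delta)^{3/2}$ together with its theta-multiplier factors out, and re-indexing the inner sum by $j'$ turns $e^{-2\pi ibj/a}$ into $e^{-2\pi ib\alpha^2j'/a}$; since $\alpha^2\equiv1\pmod a$ this is $e^{-2\pi ibj'/a}$, the sum closes, and $P_b\mathscr F$ satisfies the weight-$3/2$ transformation law for $M$. (This is also why one obtains only $G_a$ rather than all of $\Gamma_0(A)$: for a general $M\in\Gamma_0(A)$ one lands on $P_{b\alpha^2}\mathscr F$, which equals $P_b\mathscr F$ precisely when $b\alpha^2\equiv b\pmod a$.) Holomorphy at the cusps of $G_a$ is then inherited from that of $\mathscr F$ on $\Gamma_0(4)$ by the same translation-and-reindex argument, after which $\mathscr H_1(z;a,b)\in\mathscr M_{3/2}(G_a)$.

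I expect the main obstacle to be the half-integral-weight multiplier bookkeeping: one must check that the theta-multiplier $\legendre{\gamma}{\delta}\varepsilon_\delta^{-3}$ attached to $M_j$ agrees with the one attached to $M$ for \emph{every} $j$, and it is precisely the mismatch coming from replacing $\delta$ by $\delta-\gamma j'/a$ modulo powers of $2$ that dictates the two cases $A=a^2$ (when $4\mid a$) and $A=4a^2$ (otherwise) in the statement. A secondary technicality is to make the vanishing of the non-holomorphic part fully rigorous, which needs Zagier's explicit formula for $\beta$ and the observation that its only contributions are to the coefficients indexed by $-f^2$, the case $f=0$ (the $y^{-1/2}$-term) being handled separately but likewise removed because $b\not\equiv0\pmod a$.
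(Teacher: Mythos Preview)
Your proposal is correct and is precisely the approach the paper has in mind: the paper does not give a proof in the text but says the result is obtained ``using techniques similar to those found in~\cite[pp.~128--129]{kob:1993}'', i.e.\ the Koblitz-style sieving of Zagier's non-holomorphic weight-$3/2$ Eisenstein series that you outline. Your identification of why the non-holomorphic part vanishes (exponents $-f^2$ versus the quadratic non-residue hypothesis), why only $G_a$ rather than $\Gamma_0(A)$ is obtained (the re-indexing produces $P_{b\alpha^2}$), and why the two cases for $A$ arise (the theta-multiplier bookkeeping modulo powers of $2$) all match the intended argument.
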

We now turn to the proof of Theorem~\ref{mod3split}.

\vspace{12pt}
\noindent\textit{Proof of Theorem~\ref{mod3split}}.

It is well-known that the classical theta series
$\theta(z):=\sum_{s=-\infty}^\infty q^{s^2}\in\mathscr{M}_{1/2}(4)$.
Applying Theorem~\ref{mfandH}, we see that
$\displaystyle\mathscr{H}_1(z;3,1)
=\sum_{N\equiv 1\pmod{3}}H(N)q^N\in\mathscr{M}_{3/2}(36)$.
Note that in this case, $G_3=\Gamma_0(36)$.
Thus, we can check that product
$\mathscr{H}_1(z;3,1)\theta(z)\in\mathscr{M}_2(36)$.  Observe that
the coefficients of the product bear a striking resemblance to the 
sums of interest.  Indeed,
\begin{eqnarray*}
\mathscr{H}_1(z;3,1)\theta(z)
&=&\sum_{s=-\infty}^\infty\sum_{N\equiv 1\pmod 3}H(N)q^{N+s^2}\\
&=&\sum_{n\equiv 1\pmod 3}
\left(\sum_{\stackrel{|s|<\sqrt n,}{s\equiv 0\pmod 3}}H(n-s^2)\right)q^n\\
& &+\sum_{n\equiv 2\pmod 3}
\left(\sum_{\stackrel{|s|<\sqrt n,}{s\equiv\pm 1\pmod 3}}H(n-s^2)\right)q^n.
\end{eqnarray*}

We will prove Theorem~\ref{mod3split} by expressing 
$\mathscr{H}_1(z;3,1)\theta(z)$ as a linear combination of 
basis forms with ``nice" Fourier coefficients.  Note that 
Theorem~\ref{coefftbound} says that we will only need to consider 
the first 13 coefficients in order to do this.

Let $\chi_0$ denote the principal character of conductor 1, and 
let $\chi_{0,2}$ and $\chi_{0,3}$ denote the trivial characters 
modulo 2 and 3 respectively.  Finally let $\legendre{\cdot}{3}$ denote the 
Legendre symbol modulo 3.  Then one can show that $\mathscr{E}_2(36)$ 
has dimension 11 over $\cc$ and is spanned by
$$\left\{
\begin{array}{ccc}
E_2(z;\chi_0,\chi_{0,2}), &
E_2(z;\chi_0,\chi_{0,3}), &
E_2(z;\legendre{\cdot}{3},\legendre{\cdot}{3}),\\
E_2(2z;\chi_0,\chi_{0,2}), &
E_2(3z;\chi_0,\chi_{0,2}), &
E_2(9z;\chi_0,\chi_{0,2}), \\
E_2(6z;\chi_0,\chi_{0,2}), &
E_2(18z;\chi_0,\chi_{0,2}), &
E_2(3z;\chi_0,\chi_{0,3}),\\
E_2(2z;\legendre{\cdot}{3},\legendre{\cdot}{3}),&
E_2(4z;\legendre{\cdot}{3},\legendre{\cdot}{3}) &
\end{array}\right\}.$$
The cusp space $\mathscr{S}_2(36)$ is 1 dimensional and is spanned by the 
cusp form associated to the elliptic curve
\begin{equation*}
E: y^2=x^3+1,
\end{equation*}
which is the inverse Mellin transform of the $L$-series
\begin{eqnarray*}
L(E,s)&=&\prod_{p\dnd 36}(1-a(p)p^{-s}+p^{1-2s})^{-1}
=\sum_{(n,6)=1}\frac{a(n)}{n^s}\\
&=&\sum_{(n,6)=1}
\prod_{p|n}\left[
\sum_{\left\lceil\frac{\ord_p(n)}{2}\right\rceil\le k\le \ord_p(n)}
\binom{k}{\ord_p(n)-k}a(p)^{2k-\ord_p(n)}(-p)^{\ord_p(n)-k}\right]\frac{1}{n^s},
\end{eqnarray*}
where $a(p):=p+1-\#E(\gf_p)$, and we take $a(p)^0=1$ even if $a(p)=0$.  
We will denote this cusp form by $f_E(z)$.

One can verify computationally that
\begin{eqnarray*}
\mathscr{H}_1(z;3,1)\theta(z)
&=& 
\frac{-1}{16}E_2(z;\chi_0,\chi_{0,2})
+\frac{3}{16}E_2(z;\chi_0,\chi_{0,3})
+\frac{-1}{24}E_2(z;\legendre{\cdot}{3},\legendre{\cdot}{3})\\
&+&\frac{-1}{2}E_2(2z;\chi_0,\chi_{0,2})
+\frac{1}{4}E_2(3z;\chi_0,\chi_{0,2})
+\frac{-3}{16}E_2(9z;\chi_0,\chi_{0,2})\\
&+&2E_2(6z;\chi_0,\chi_{0,2})
+\frac{-3}{2}E_2(18z;\chi_0,\chi_{0,2})
+\frac{-3}{16}E_2(3z;\chi_0,\chi_{0,3})\\
&+&\frac{-1}{8}E_2(2z;\legendre{\cdot}{3},\legendre{\cdot}{3})
+\frac{-1}{3}E_2(4z;\legendre{\cdot}{3},\legendre{\cdot}{3})
+\frac{-1}{12}f_E(z).
\end{eqnarray*}

Let $\sigma(n):=\sigma_1(n)=\sum_{d|n}d$,
and define the arithmetic functions
\begin{eqnarray*}
 \mu_1(n)&:=&\sum_{\substack{d|n,\\ d\not\equiv 0\pmod 2}}d,\\
 \mu_2(n)&:=&\sum_{\substack{d|n,\\ d\not\equiv 0\pmod 3}}d,\\
 \mu_3(n)&:=&\legendre{n}{3}\sigma(n).
\end{eqnarray*}
Extend these to $\qq$ by setting
$\mu_i(r)=0$ for $r\in\qq\backslash\zz$ ($i=1,2,3$).
Comparing $n$-th coefficients, we have the following proposition.
\begin{prop}\label{techprop}
\begin{eqnarray*}
\sum_{|s|<\sqrt n}\substack{*\\ \\ \ }H(n-s^2)
=&-&\frac{1}{16}\mu_1(n)
+\frac{3}{16}\mu_2(n)
-\frac{1}{24}\mu_3(n)
-\frac{1}{2}\mu_1(n/2)\\
&+&\frac{1}{4}\mu_1(n/3)
-\frac{3}{16}\mu_1(n/9)
+2\mu_1(n/6)
-\frac{3}{2}\mu_1(n/18)\\
&-&\frac{3}{16}\mu_2(n/3)
-\frac{1}{8}\mu_3(n/2)
-\frac{1}{3}\mu_3(n/4)
-\frac{1}{12}a(n),
\end{eqnarray*}
where the * denotes the fact that 
if $n\equiv 1\pmod 3$, we take the sum over all 
$s\equiv 0\pmod 3$; if $n\equiv 2\pmod 3$, we take 
the sum over all $s\equiv\pm 1\pmod 3$.
\end{prop}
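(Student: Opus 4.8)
The plan is to read off the identity of Proposition~\ref{techprop} directly from the explicit linear combination for $\mathscr{H}_1(z;3,1)\theta(z)$ already displayed, by comparing $q^n$ coefficients on both sides. The left side is immediate: we established above that
\begin{equation*}
\mathscr{H}_1(z;3,1)\theta(z)=\sum_{n\ge 0}\left(\sum_{|s|<\sqrt n}\!\!{}^{*}\;H(n-s^2)\right)q^n,
\end{equation*}
where the $*$ means $s\equiv 0\pmod 3$ when $n\equiv 1\pmod 3$ and $s\equiv\pm1\pmod3$ when $n\equiv 2\pmod 3$ (and the coefficient is $0$ when $n\equiv 0\pmod 3$, consistent with $\mu_i$ vanishing appropriately). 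So everything reduces to identifying the $n$-th Fourier coefficient of each of the twelve basis forms appearing on the right.

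First I would record the coefficient formulas for the Eisenstein pieces from the definition of $E_2(z;\psi_1,\psi_2)$ given earlier: for $n\ge 1$ its $n$-th coefficient is $\sum_{d\mid n}\psi_1(n/d)\psi_2(d)d$. Taking $\psi_1=\chi_0$, $\psi_2=\chi_{0,2}$ gives $\sum_{d\mid n,\,2\nmid d}d=\mu_1(n)$; taking $\psi_2=\chi_{0,3}$ gives $\mu_2(n)$; and taking $\psi_1=\psi_2=\legendre{\cdot}{3}$ gives $\sum_{d\mid n}\legendre{n/d}{3}\legendre{d}{3}d=\legendre{n}{3}\sum_{d\mid n}d=\mu_3(n)$, using complete multiplicativity of the Legendre symbol modulo $3$. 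Next, the substitution $z\mapsto tz$ replaces $q$ by $q^t$, so the $n$-th coefficient of $E_2(tz;\psi_1,\psi_2)$ is the $(n/t)$-th coefficient of $E_2(z;\psi_1,\psi_2)$, which is exactly why the arithmetic functions are extended to $\qq$ by setting $\mu_i(r)=0$ for $r\notin\zz$. Thus $E_2(2z;\chi_0,\chi_{0,2})$ contributes $\mu_1(n/2)$, $E_2(3z;\chi_0,\chi_{0,2})$ contributes $\mu_1(n/3)$, and so on through all eight $\chi_{0,2}$-type terms and the two $\legendre{\cdot}{3}$-type terms at arguments $2z,4z$, matching $\mu_2(n/3)$, $\mu_3(n/2)$, $\mu_3(n/4)$. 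Finally the cusp form $f_E(z)=\sum a(n)q^n$ contributes $a(n)$ directly. Multiplying each by its coefficient in the displayed linear combination and summing gives precisely the right-hand side of Proposition~\ref{techprop}.

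There is essentially no obstacle of substance here: the work lives in the sentence ``one can verify computationally that $\mathscr{H}_1(z;3,1)\theta(z)=\cdots$'', which is justified by Theorem~\ref{coefftbound} (only the first $13$ coefficients of a form in $\mathscr{M}_2(36)$ need checking, since $\tfrac{2}{12}\cdot 36\cdot(1+\tfrac12)(1+\tfrac13)=12$). So the proof of the proposition itself is just the bookkeeping of the previous paragraph. The only point requiring a word of care is the $a_0$ term: each Eisenstein series has a nonzero constant term, but the claimed identity and the definition of $H(0)=-1/12$ are arranged so that the $n=0$ coefficients also balance; I would note this as a sanity check rather than dwell on it, and restrict the stated identity to $n\ge1$ where the sum $\sum^{*}H(n-s^2)$ is the object of interest. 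I expect the ``hard'' part, such as it is, to be nothing more than carefully tracking which dilates $E_2(tz;\cdot,\cdot)$ appear so that each $\mu_i(n/t)$ lands with the correct rational coefficient; this is routine but error-prone, and is exactly the content the proposition packages for use in the remaining derivation of Theorem~\ref{mod3split}.
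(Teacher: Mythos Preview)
Your proposal is correct and follows exactly the paper's approach: the paper simply states ``Comparing $n$-th coefficients, we have the following proposition'' immediately after displaying the linear combination of basis forms, and you have filled in precisely that comparison, correctly matching each $\mu_i$ to its Eisenstein series and each dilate $z\mapsto tz$ to the argument shift $n\mapsto n/t$.
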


Using (\ref{symmetry}) and the fact that $a(n)=0$ if $(n,6)>1$, 
we are able to extract the identities
\begin{equation*}
\sum_{\substack{|r|<2\sqrt p,\\ r\equiv c\pmod 3}}H(4p-r^2)
=\begin{cases}
\frac{p+1}{2}, &\mathrm{ if }\ p\equiv 1\pmod 3,\ c=0,\\
\frac{p+1}{2}, &\mathrm{ if }\ p\equiv 2\pmod 3,\ c= 1\mbox{ or } 2.
\end{cases}
\end{equation*}
So, using (\ref{fullsum}), we are able to obtain Theorem~\ref{mod3split}
as a corollary.

At this point, we also note that there is a more general identity than 
(\ref{fullsum}), which is due to Hurwitz.  See~\cite[p. 236]{coh:1993}. 
In particular, if we let $\lambda(n):=\frac{1}{2}\sum_{d|n}\min(d,n/d)$, then
\begin{equation*}
\sum_{|r|<2\sqrt N}H(4N-r^2)=2\sigma(N)-2\lambda(N).
\end{equation*}
So, this identity together with Proposition~\ref{techprop} makes it possible to 
generalize Theorem~\ref{mod3split} for $p$ not necessarily prime.

In addition, if we study the cusp form in our basis carefully, we can extract
other nice formulae as well.  For example, we can use the fact 
that $a(p)=0$ if $p\equiv 2\pmod 3$ to obtain Theorem~\ref{anothresult}.

\section{The Eichler-Selberg Trace Formula and Hurwitz Class Numbers}
\label{trace_method}

The Eichler-Selberg trace formula gives the value of the
trace of the $n$-th Hecke operator acting on $\mathscr{S}_k(N,\psi)$.
The following gives a formula for computing the 
trace in a specific setting, which is useful for our purposes.  
The trace formula is, in fact, 
much more general and 
we refer the reader to~\cite[pp. 12-13]{hps:1989} for 
more details.
\begin{thm}Let $p$ be a prime.
The trace of the $p$-th Hecke operator acting on 
$\mathscr{S}_2(N)$ is given by
\begin{eqnarray*}
\mbox{tr}_{2,N}(T_p)=p+1
&-&\sum_{s\in H}\frac{1}{p-1}\sum_{f|t}\frac{1}{2}\phi((s^2-4p)^{1/2}/f)
\prod_{l|N}c(s,f,l)\\
&-&\sum_{s\in E_1\cup E_2}\frac{1}{2}
\sum_{f|t}\frac{h((s^2-4p)/f^2)}{\omega((s^2-4p)/f^2)}\prod_{l|N}c(s,f,l),
\end{eqnarray*}
where $H=\{s:s^2-4p=t^2\}$,
$E_1=\{s:s^2-4p=t^2m,\ 0>m\equiv 1\pmod 4\}$, and
$E_2=\{s:s^2-4p=t^24m,\ 0>m\equiv 2,3\pmod 4\}$.
\end{thm}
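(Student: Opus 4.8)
\noindent\textit{Proof proposal.} The theorem is the weight-$2$, trivial-character, prime-index specialization of the classical Eichler--Selberg trace formula, so the plan is to derive it from the general formula (in the form given in~\cite[pp.~12--13]{hps:1989}) rather than to reprove that formula; one only needs to indicate how the terms collapse. First I would write out the general expression for $\mathrm{tr}(T_n\mid\mathscr{S}_k(N,\psi))$, which splits into an identity (scalar) contribution present only when $n$ is a perfect square, an \emph{elliptic} sum over integers $s$ with $s^2-4n<0$, a \emph{hyperbolic}/parabolic contribution over $s$ with $s^2-4n$ a nonnegative square (together with the usual divisor term), and, when $k=2$, an extra Eisenstein correction. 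Each elliptic and hyperbolic summand carries a product $\prod_{l\mid N}c(s,f,l)$ of local optimal-embedding numbers and a weight polynomial of degree $k-2$ in $s$ (the complete homogeneous symmetric function of the two roots of $X^2-sX+n$). Then I would set $k=2$, $\psi$ trivial, and $n=p$ a prime.

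Under this specialization: (i) the identity term vanishes since $p$ is never a square; (ii) the weight polynomial is identically $1$ because its degree $k-2$ is $0$, so it disappears from both the elliptic and the hyperbolic summands; (iii) the elliptic term becomes exactly $\sum_{s\in E_1\cup E_2}\tfrac12\sum_{f\mid t}\frac{h((s^2-4p)/f^2)}{\omega((s^2-4p)/f^2)}\prod_{l\mid N}c(s,f,l)$, where the split into $E_1$ and $E_2$ is the standard bookkeeping expressing $s^2-4p$ as a square times a discriminant (either $m\equiv1\bmod4$ directly, or $4m$ with $m\equiv2,3\bmod4$) and the ratio $h/\omega$ over orders supplies the Hurwitz weighting; (iv) the hyperbolic and parabolic pieces combine into $\sum_{s\in H}\frac1{p-1}\sum_{f\mid t}\tfrac12\,\phi((s^2-4p)^{1/2}/f)\prod_{l\mid N}c(s,f,l)$, the Euler $\phi$ and the factor $\tfrac1{p-1}$ arising from counting the relevant cyclic groups (note $s\in H$ forces $s=\pm(p+1)$, $t=\pm(p-1)$, and $\sum_{f\mid t}\phi(t/f)=|t|=p-1$); and (v) the weight-$2$ Eisenstein correction, which for $n=p$ equals $\sigma_1(p)=p+1$ when $p\nmid N$, supplies the leading term. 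Assembling (i)--(v) with the signs dictated by the general formula produces the stated identity.

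The step I expect to be the main obstacle is (v): weight $2$ is the boundary case where the relevant Eisenstein series is only quasimodular, so in the literature the constant part of the trace formula is obtained by a holomorphic-projection or limiting argument, and one must check that the surviving correction is precisely $\sigma_1(p)$ with the correct sign and not, say, $\sigma_1(p)$ minus an additional divisor-sum that would appear for composite index; one should also confirm that the leading term $p+1$ tacitly presupposes $p\nmid N$, which is the only case used in the sequel (the finitely many $p$ dividing the relevant small level are handled by the direct computations already employed for $p=3$). A secondary but unavoidable chore is reconciling conventions between~\cite{hps:1989} and the statement here for the local factors $c(s,f,l)$ and for the normalizations of $h$ and $\omega$ on non-maximal orders (in particular the exceptional values $\omega\in\{4,6\}$ at discriminants $-4,-3$), and verifying that the index sets $H$, $E_1$, $E_2$ together with the inner sums over $f\mid t$ enumerate exactly the pairs $(s,f)$ occurring in the general formula when $n=p$. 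Once that dictionary is pinned down the derivation is purely substitution, with no further analytic input required.
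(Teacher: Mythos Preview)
Your proposal is reasonable, but there is nothing to compare it against: the paper does not prove this theorem. It is stated as a quoted result, with the sentence ``The trace formula is, in fact, much more general and we refer the reader to~\cite[pp.~12--13]{hps:1989} for more details'' serving in lieu of any argument. No specialization is carried out in the text; the formula is simply recorded in the form needed for the application.

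Your plan---to take the general Eichler--Selberg formula from~\cite{hps:1989} and specialize $k=2$, $\psi$ trivial, $n=p$ prime---is exactly the derivation the paper implicitly relies on but omits. The breakdown you give into identity, elliptic, hyperbolic/parabolic, and Eisenstein pieces is the standard one, and your identification of the potential trouble spot (the weight-$2$ Eisenstein correction and the tacit hypothesis $p\nmid N$) is apt. If you were to write this up, the only work is the bookkeeping you flag: matching the local factors $c(s,f,l)$ and the normalizations of $h,\omega$ to the conventions in~\cite{hps:1989}. But for the purposes of this paper, the theorem is treated as input, not as something to be proved.
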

Here, $\phi$ is the Euler $\phi$-function. For $d<0$,
$h(d)$ is the class number of the order of $\qq(\sqrt d)$ of 
discriminant $d$, and $\omega(d)$ is 1/2 the cardinality of its unit group. 
For a prime $l|N$, 
$c(s,f,l)$ essentially counts the number of solutions to a certain 
system of congruences.

It is well-known that if $s\in E_1\cup E_2$, then
\begin{equation*}
H(4p-s^2)=\sum_{f|t}\frac{h((s^2-4p)/f^2)}{\omega((s^2-4p)/f^2)}.
\end{equation*}
In fact, one may even define the Hurwitz class number in this way.
See~\cite[p. 318]{cox:1989}.
So, if it is possible to control the $c(s,f,l)$ --  in 
particular, if it is possible to make them constant with respect to $f$, 
then there is hope that Hurwitz class number relations may be extracted from 
the trace formula.  Indeed, if $p$ is quadratic nonresidue modulo $l$ for 
all primes $l$ dividing $N$, then the computation of $c(s,f,l)$ is quite simple 
and does not depend on $f$.

%

For example, if we apply the trace formula to $T_p$ acting on 
$\mathscr{S}_2(7)=\{0\}$ for 
$p\equiv 3,5,6\pmod 7$, 
\begin{equation*}
c(s,f,l)=\begin{cases}
2,& p\equiv 3\pmod 7,\ s\equiv 0,\pm 3,\\
2,& p\equiv 5\pmod 7,\ s\equiv 0,\pm 1,\\
2,& p\equiv 6\pmod 7,\ s\equiv 0,\pm 2,\\
0,&\mbox{otherwise.}
\end{cases}
\end{equation*}
The resulting Hurwitz class number relation is the following.

\begin{prop}
\begin{equation*}
\sum_{|s|<2\sqrt p}\substack{* \\ \\ \ }H(4p-s^2)=p-1,
\end{equation*}
where the * denotes the fact that
if $p\equiv 3\pmod 7$, the sum is over all $s\equiv 0,\pm 3\pmod 7$; if 
$p\equiv 5\pmod 7$, the sum is over all $s\equiv 0,\pm 1\pmod 7$; and if 
$p\equiv 6\pmod 7$, the sum is over all $s\equiv 0,\pm 2\pmod 7$. 
\end{prop}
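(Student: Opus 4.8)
The plan is to apply the trace formula above with $N=7$, using the fact noted just above that $\mathscr{S}_2(7)=\{0\}$, so that $\mathrm{tr}_{2,7}(T_p)=0$ for every prime $p$. The identity $0=p+1-(\text{hyperbolic term})-(\text{elliptic term})$ will then, after we insert the values of $c(s,f,7)$ recorded immediately before the proposition, collapse directly to the asserted relation. Throughout, $p\equiv 3,5,6\pmod 7$, that is, $p$ is a quadratic nonresidue modulo $7$; in particular $p\ne 7$.

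First I would identify $H=\{s:s^2-4p=t^2\}$. For $p$ an odd prime, $s^2-4p=t^2$ with $t\ge 0$ forces $(|s|-t)(|s|+t)=4p$, and the only factorization of $4p$ into two positive factors of the same parity is $2\cdot 2p$; hence $H=\{\pm(p+1)\}$, with $t=p-1$ and $(s^2-4p)^{1/2}=p-1$ in both cases. Next I would evaluate the hyperbolic term. Since $p\equiv 3,5,6\pmod 7$ gives $p+1\equiv -3,-1,0\pmod 7$ respectively, in each case $p+1$ lies in the residue-class set on which $c(s,f,7)=2$; hence $c(\pm(p+1),f,7)=2$, independently of $f$. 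Using $\sum_{f\mid t}\phi(t/f)=\sum_{d\mid t}\phi(d)=t=p-1$, the contribution of each $s=\pm(p+1)$ is $\frac{1}{p-1}\cdot\frac12\cdot 2\cdot(p-1)=1$, so the hyperbolic term equals $2$.

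For the elliptic term I would use the identity, recalled in the text, $\sum_{f\mid t}h\big((s^2-4p)/f^2\big)\big/\omega\big((s^2-4p)/f^2\big)=H(4p-s^2)$ for $s\in E_1\cup E_2$, i.e. for $|s|<2\sqrt p$. Since $c(s,f,7)$ is independent of $f$ and equals $2$ or $0$ according as $s$ does or does not lie in the prescribed residue classes modulo $7$, the elliptic term simplifies to $\sum^{*}_{|s|<2\sqrt p}\frac12\cdot 2\cdot H(4p-s^2)=\sum^{*}_{|s|<2\sqrt p}H(4p-s^2)$, where $*$ is precisely the congruence restriction in the statement (note that $s\equiv 0\pmod 7$ is always included, since $0$ belongs to each of the sets $\{0,\pm 3\}$, $\{0,\pm 1\}$, $\{0,\pm 2\}$). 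Substituting the two pieces into $0=p+1-2-(\text{elliptic term})$ yields $\sum^{*}_{|s|<2\sqrt p}H(4p-s^2)=p-1$.

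The one point genuinely requiring care is the determination of the numbers $c(s,f,7)$, which is where the hypothesis $p\equiv 3,5,6\pmod 7$ enters: being a nonresidue modulo $7$ guarantees $7\nmid s^2-4p$ for every $s$, so $c(s,f,7)$ reduces to $1+\legendre{s^2-4p}{7}$, visibly independent of $f$ and equal to $2$ or $0$ according as $s^2-4p$ is or is not a square modulo $7$; translating the latter condition into a condition on $s\bmod 7$ produces the three residue-class sets above. This is exactly the content of the displayed formula for $c(s,f,l)$ preceding the proposition, so in the write-up I would simply cite it and then carry out the bookkeeping just described.
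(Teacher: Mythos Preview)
Your proposal is correct and follows exactly the approach the paper indicates: apply the trace formula with $N=7$, use $\mathscr{S}_2(7)=\{0\}$ so the trace vanishes, and then the displayed values of $c(s,f,7)$ collapse the elliptic term to the starred Hurwitz sum. You have simply filled in the bookkeeping the paper leaves implicit---identifying $H=\{\pm(p+1)\}$, checking that $\pm(p+1)$ lies in the prescribed residue classes so the hyperbolic contribution is $2$, and observing that the $f$-independence of $c(s,f,7)$ lets the inner sum become $H(4p-s^2)$---all of which is correct.
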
 
Combining the above proposition with the cases of Theorem~\ref{partialmod7} 
that were proven in Section~\ref{elliptic_method}, we are able to obtain the 
remaining formulae in the theorem.

\section{Conjectures}\label{conj}

For all primes $p$ sufficiently large, 
the tables in this section give conjectured 
values for the sum
$$\sum_{\substack{|r|<2\sqrt p, \\ r \equiv c \pmod{m}}}H(4p-r^2).$$
These values have been checked for primes $p<1,000,000$.  Where an entry is 
bold and 
marked by asterisks, the 
formula is handled by a theorem somewhere in this paper; where an entry is 
blank, we were not able to recognize any simple pattern from the computations.

We note that for $m=5$ and $7$, neither the curve counting method of 
Section~\ref{elliptic_method} alone nor the basis of modular forms approach of 
Section~\ref{mf_method} alone will be sufficient for a complete 
characterization of these sums.  Rather a combination of the two methods should 
work.

\begin{table}[h]
\begin{center}
\begin{tabular}{|c|c|c|c|}
\hline
&$c=0$&$c=\pm 1$&$c=\pm 2$\\
\hline
$p\equiv 1\pmod 5$&$\frac{(p+1)}{2}$&$\frac{(p+1)}{3}$&$\frac{(5p-7)}{12}$\\
\hline
$p\equiv 2\pmod 5$&$\frac{(p+1)}{3}$&$\frac{(p+1)}{3}$&\boldmath $*\frac{(p-1)}{2}*$\\
\hline
$p\equiv 3\pmod 5$&$\frac{(p+1)}{3}$&\boldmath $*\frac{(p-1)}{2}*$&$\frac{(p+1)}{3}$\\
\hline
$p\equiv 4\pmod 5$&\boldmath $*\frac{(p-3)}{2}*$&$\frac{(5p+5)}{12}$&$\frac{(p+1)}{3}$\\
\hline
\end{tabular}
\end{center}
\caption{$m=5$}
\end{table}

\begin{table}[h]
\begin{center}
\begin{tabular}{|c|c|c|c|c|}
\hline
&$c=0$&$c=\pm 1$&$c=\pm 2$&$c=\pm 3$\\
\hline
$p\equiv 1\pmod 7$&&$\frac{(p+1)}{3}$&&\\
\hline
$p\equiv 2\pmod 7$&&&&\boldmath$*\frac{(p-2)}{3}*$\\
\hline
$p\equiv 3\pmod 7$&\boldmath$*\frac{(p+1)}{3}*$&$\frac{(p+1)}{4}$&$\frac{(p+1)}{4}$&\boldmath$*\frac{(p-2)}{3}*$\\
\hline
$p\equiv 4\pmod 7$&&&\boldmath$*\frac{(p-2)}{3}*$&\\
\hline
$p\equiv 5\pmod 7$&\boldmath$*\frac{(p+1)}{3}*$&\boldmath$*\frac{(p-2)}{3}*$&$\frac{(p+1)}{4}$&$\frac{(p+1)}{4}$\\
\hline
$p\equiv 6\pmod 7$&\boldmath$*\frac{(p-5)}{3}*$&$\frac{(p+1)}{4}$&\boldmath$*\frac{(p+1)}{3}*$&$\frac{(p+1)}{4}$\\
\hline
\end{tabular}
\end{center}
\caption{$m=7$}
\end{table}

For making further progress on the above tables,
it appears that the method of Section~\ref{mf_method} will be most fruitful.  
The difficulty in using this method is that, in each
case, the group $G_m$ (defined in Theorem~\ref{mfandH})
is strictly contained in $\Gamma_0(4m^2)$. 
So, we will need a much larger basis of modular forms. Certainly a basis for 
$\mathscr{M}_2(\Gamma_1(4m^2))$ would be sufficient.
However, to completely fill in the table, another method will be needed.
Perhaps an adaptation of the curve counting method of 
Section~\ref{elliptic_method} for $p\equiv 1\pmod m$ would be best.  
The main obstacle to overcome is that one must deal with 
the presence of 
full $m$-torsion curves when $p\equiv 1\pmod m$.  

In general, for a prime $m$, we
note that the curve counting method will give proofs for 
$c\equiv\pm (p+1)\pmod m$.
We also note that, for primes $m\equiv 1\pmod 4$, the basis of forms method will 
give proofs for the cases when $4p-c^2$ is not a square; and for primes 
$m\equiv 3\pmod 4$, the basis of forms method will give proofs for the cases 
when $4p-c^2$ is a square.  Thus, for a prime $m$, we will obtain 
half the cases from the basis of forms approach and we will obtain 
$2(m-1)+1$ more from the curve counting approach assuming that the case 
$p\equiv 1\pmod m$ can be adequately handled.  So, at least for primes
greater than 7, a third method will be necessary to fully characterize 
how the sum splits.

\section{Acknowledgments}

The authors would like to thank Ken Ono for suggesting that we consider the 
Eichler-Selberg trace formula.  The authors would also like to thank Robert 
Osburn for helpful conversation.

\bibliography{references}
\end{document}